\newtheorem{thm}{Theorem}[section] 
\newtheorem{theorem}[thm]{Theorem}
\newtheorem{proposition}[thm]{Proposition}
\newtheorem{lemma}[thm]{Lemma}
\newtheorem{corollary}[thm]{Corollary}
\theoremstyle{definition}
\newtheorem{remark}[thm]{Remark}
\newcommand{\ie}{\emph{i.e.}}
\newcommand{\eg}{\emph{e.g.}}
\newcommand{\cf}{\emph{cf.}}
\newcommand{\Real}{\mathbb{R}}
\newcommand{\Com}{\mathbb{C}}
\newcommand{\eps}{\varepsilon}
\newcommand{\sii}{L^2}
\newcommand{\sgn}{\mathrm{sgn}}
\DeclareMathOperator{\dom}{dom}
\DeclareRobustCommand{\rchi}{{\mathpalette\irchi\relax}}
\newcommand{\irchi}[2]{\raisebox{\depth}{$#1\chi$}}
\newcommand\restr[2]{{
  \left.\kern-\nulldelimiterspace 
  #1 
  \littletaller 
  \right|_{#2} 
  }}
\newcommand{\littletaller}{\mathchoice{\vphantom{\big|}}{}{}{}}
\definecolor{DarkGreen}{rgb}{0,0.5,0.1} 
\definecolor{DarkBlue}{rgb}{0,0.1,0.7} 
\newcommand\soutM{\bgroup\markoverwith
{\textcolor{DarkBlue}{\rule[.5ex]{2pt}{1pt}}}\ULon}
\newcommand{\Hm}[1]{\leavevmode{\marginpar{\tiny%
$\hbox to 0mm{\hspace*{-0.5mm}$\leftarrow$\hss}%
\vcenter{\vrule depth 0.1mm height 0.1mm width \the\marginparwidth}%
\hbox to
0mm{\hss$\rightarrow$\hspace*{-0.5mm}}$\\\relax\raggedright #1}}}
\begin{document}

\title{\textbf{Dirac operators on the half-line:
stability of spectrum and non-relativistic limit}}
\author{David Kram\'ar \ and \ David Krej\v ci\v r\'ik}
\date{\small 
\emph{
\vspace{-4ex}
\begin{quote}
\begin{center}
Department of Mathematics, Faculty of Nuclear Sciences and 
Physical Engineering, Czech Technical University in Prague, 
Trojanova 13, 12000 Prague 2, Czech Republic 
\\
kramada1@fjfi.cvut.cz, david.krejcirik@fjfi.cvut.cz
\end{center}
\end{quote}
}
\smallskip 
16 May 2024
}
\maketitle

\begin{abstract}
We consider Dirac operators on the half-line,
subject to generalised infinite-mass boundary conditions.
We derive sufficient conditions which guarantee 
the stability of the spectrum against 
possibly non-self-adjoint potential perturbations 
and study the optimality of the obtained results.
Finally, we establish a non-relativistic limit 
which makes a relationship of the present model
to the Robin Laplacian on the half-line.
%
%
\end{abstract}

\section{Introduction}
%
According to classical physics, 
the electrons in atoms would have collapsed into the nucleus 
in a matter of nanoseconds~\cite{1915}, 
which is in direct contradiction to our experience. 
This paradox was one of the impetuses leading 
to the development of quantum mechanics
which particularly explains the stability of matter~\cite{stability}. 
Mathematically, this is a consequence of the stability
of the spectrum of the Laplacian~$-\Delta$ in~$\Real^3$
against additive perturbations $V:\Real^3 \to \Com$
satisfying the subordination condition
\cite{katoBS,FKV,Birman}
\begin{equation}\label{sub}
  \forall \psi \in W^{1,2}(\Real^3)
  \,, \qquad
  \int_{\Real^3} |V|\,|\psi|^2 \leq a \int_{\Real^3} |\nabla\psi|^2
  \,,
\end{equation}
with $a < 1$.
Indeed, \eqref{sub}~together with the Hardy inequality implies 
that the spectrum of the Coulomb Hamiltonian is bounded from below. 
The strength of the condition~\eqref{sub} is that it ensures
not only the set stability $\sigma(-\Delta+V) = \sigma(-\Delta)$,
but also the absence of embedded eigenvalues.
Moreover, the fact that~$V$ is allowed to be complex-valued
makes the spectral analysis substantially more challenging,
while the problem is still important for applications~\cite{KSTV}. 

There exist generalisations of the subordination condition~\eqref{sub}
to higher dimensions \cite{katoBS,Frank,FKV,Birman}.
Because of the absence of Hardy inequality,
the condition is void in dimensions~$1$ and~$2$. 
However, there exist two-dimensional analogues 
for the magnetic Laplacian \cite{FKV2,CFK}.
To get a non-trivial result in dimension one,
it is necessary to restrict the Laplacian to a half-line,
subject to repulsive boundary conditions of Robin type
\cite{CK2,KLS}.

The motivation of the present paper is to establish 
one-dimensional analogues of~\eqref{sub} 
in the relativistic setting of Dirac operators on the half-line.
While this model was considered previously
\cite{Cuenin_2014,Enblom_2018},
the stability of the spectrum has not been addressed
and our objective is to fill in this gap. 
For an analogous stability problem for 
the Dirac operator in higher dimensions, 
see \cite{AFKS,CFK,Ancona-Fanelli-Schiavone_2022,Mizutani-Schiavone_2022}.

To be more specific, given $m \geq 0$ (mass),
we consider the unperturbed Dirac operator
\begin{equation}\label{operator}
D_0 :=  
\begin{pmatrix}
m & -\dfrac{d}{dx} \\
\dfrac{d}{dx} & -m \\
\end{pmatrix}
\qquad \mbox{in} \qquad 
\sii\big((0,\infty),\Com^2\big)
\end{equation}
with the domain consisting of functions
$
  \psi = 
  \begin{psmallmatrix}
    \psi_1 \\ \psi_2
  \end{psmallmatrix}
  \in H^1\big((0,\infty),\Com^2\big)
$
satisfying the one-pa\-ram\-e\-tric class of
generalised infinite-mass boundary conditions
\begin{equation}\label{bc.intro}
  \psi_1(0)\cot(\alpha) = \psi_2(0)
  \qquad \mbox{with} \qquad
  \alpha \in \left(0,\mbox{$\frac{\pi}{2}$}\right) \,.
\end{equation}
The latter restriction ensures that~$D_0$
is self-adjoint and its spectrum is purely continuous: 
\begin{equation}\label{spectrum}
  \sigma(D_0)
  = \sigma_\mathrm{c}(D_0)
  = (-\infty,-m] \cup [m,+\infty)
  \,.
\end{equation}
Given $V: (0,\infty) \to \Com^{2,2}$, 
we consider the perturbed operator $D_0 + V$.
To be more specific, 
since we merely assume 
$V \in L^1\big( (0,\infty), \Com^{2,2} \big)$,
we introduce a closed (not necessarily self-adjoint) extension~$D_V$  
of the operator sum $D_0 + V$
via Kato's resolvent formula
(see~\cite{katoBS,Cuenin-Laptev-Tretter_2014} and below).
For almost every $x \in (0,\infty)$, $V(x)$ is a $2 \times 2$ matrix
and we denote by $|V(x)|$ its operator norm in~$\Com^2$.
Our main result reads as follows.
\begin{theorem}\label{Thm.main}
Let $V \in L^1\big((0,\infty),\Com^{2,2};(1+x) \, dx\big)$ satisfy
\begin{align}\label{sufcon}
\int_0^\infty \int_0^\infty 
\lvert V(x) \rvert 
\left[ 1 + \big(q + 2m \min(x,y)\big)^2\right]  
\lvert V(y) \rvert \, dx \, dy < 1 \,,
\end{align}
where $q := \max\big(\cot(\alpha),{\cot(\alpha)}^{-1}\big)$.
Then  
$
  \sigma(D_V)
  = \sigma_\mathrm{c}(D_V)
  = \sigma(D_0)
$.  
\end{theorem}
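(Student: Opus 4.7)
My plan is a Birman--Schwinger argument via Kato's non-self-adjoint resolvent formula, in three logical steps: (i)~compute the integral kernel $G_0(x,y;z)$ of $R_0(z)=(D_0-z)^{-1}$ explicitly; (ii)~establish a uniform pointwise bound $|G_0(x,y;z)|^2\le 1+(q+2m\min(x,y))^2$ valid on $\rho(D_0)$ and up to its boundary on $\sigma(D_0)$; (iii)~factor $V$ appropriately and invoke Kato's formula to identify the resolvent of $D_V$.

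For step (i), I would solve $(D_0-z)\psi=0$, which reduces to the scalar equation $\psi_1''+(z^2-m^2)\psi_1=0$. With $k:=\sqrt{z^2-m^2}$ chosen so that $\operatorname{Im}k>0$ off $\sigma(D_0)$, there is a regular solution $\psi_\alpha(\cdot;z)$ built from $\cos(kx),\sin(kx)$ and satisfying \eqref{bc.intro}, and a Jost solution $\psi_\infty(x;z)=\bigl(1,\tfrac{ik}{z+m}\bigr)^{\!\top}e^{ikx}$ that is square-integrable at~$+\infty$. Variation of parameters then produces $G_0(x,y;z)$, with Wronskian proportional to $(z+m)\cos\alpha-ik\sin\alpha$.

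Step (ii) is the heart of the argument and the main obstacle I anticipate. The $q=\max(\cot\alpha,\tan\alpha)$ dependence arises from the two coefficients entering $\psi_\alpha$ after normalisation through~\eqref{bc.intro}, so that both $\cot\alpha$ and $\tan\alpha$ appear and one retains their maximum. The linear-in-$m\min(x,y)$ growth comes from the threshold asymptotics $k^{-1}\sin(kx)\sim x$ as $z\to\pm m$ combined with the vanishing of the Wronskian denominator: estimating factors separately would yield a divergent bound, and only their specific combination in $G_0$ stays finite. Tracking these cancellations both in the oscillatory regime $|z|>m$ and in the degenerate limits $z\to\pm m$, and verifying that the bound persists for the boundary values $z\pm i 0$ on $\sigma(D_0)$, is the key technical task.

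Step (iii) is then routine. Factor $V=A^*B$ with $A=\sgn(V)^{\!*}|V|^{1/2}$ and $B=|V|^{1/2}$, and set $K(z):=BR_0(z)A^*$. Step (ii) together with \eqref{sufcon} yields the operator estimate
\begin{equation*}
\|K(z)\|^{2}\le\int_0^\infty\!\!\int_0^\infty\!|V(x)|\bigl[1+(q+2m\min(x,y))^2\bigr]|V(y)|\,dx\,dy<1
\end{equation*}
(for instance via a Hilbert--Schmidt computation, using $V\in L^1$), uniformly on $\rho(D_0)$. Hence $1+K(z)$ is invertible and Kato's formula $R_V(z)=R_0(z)-R_0(z)A^*(1+K(z))^{-1}BR_0(z)$ provides a bounded resolvent, so $\sigma(D_V)\subset\sigma(D_0)$. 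The same estimate extended up to the boundary rules out embedded eigenvalues: an eigenfunction $\psi\ne 0$ with $(D_V-z_0)\psi=0$ and $z_0\in\sigma(D_0)$ would force $\phi:=B\psi$ to satisfy $(1+K(z_0\pm i 0))\phi=0$, which $\|K(z_0\pm i 0)\|<1$ forbids. Finally, the Hilbert--Schmidt character of $R_0(z)A^*$ and $BR_0(z)$ (granted by the weighted condition $V\in L^1((1+x)\,dx)$) renders $R_V(z)-R_0(z)$ trace class, so the essential spectrum is preserved and $\sigma(D_V)=\sigma_\mathrm{c}(D_V)=\sigma(D_0)$.
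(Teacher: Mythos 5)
Your skeleton matches the paper's: an explicit resolvent kernel for $D_0$, a sharp uniform bound on that kernel, a Kato--Birman--Schwinger factorisation of $V$, and a Hilbert--Schmidt estimate turning the uniform bound into condition \eqref{sufcon}. The genuine gap is that your step~(ii), which you yourself call the heart of the argument, is only announced, and the sketch you give does not contain the mechanism that makes it work. The supremum needed is over all of $\rho(D_0)$, i.e.\ over genuinely complex $z$, not merely over the boundary values $z\pm i0$ on $\sigma(D_0)$; ``tracking cancellations in the oscillatory regime and in the limits $z\to\pm m$'' says nothing about why no interior point of the resolvent set can beat the threshold value. The paper's Lemma~\ref{supremum} handles this by noting that $z\mapsto\mathcal{R}_\alpha(x,y;z)$ is holomorphic with values in $\Com^{2,2}$, invoking a maximum-modulus principle for Banach-space-valued maps to push the supremum to the spectrum or to complex infinity, verifying a uniform bound as $z\to\infty$, showing that the restriction to $(-\infty,-m]\cup[m,+\infty)$ has no interior critical points, and computing the limits at $\pm\infty$ and at $\pm m$; only this yields the exact constant $1+\big(q+2m\min(x,y)\big)^2$, which is precisely what \eqref{sufcon} is calibrated to.

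The endgame is also weaker than what the theorem asserts. In the paper the full claim $\sigma(D_V)=\sigma_\mathrm{c}(D_V)=\sigma(D_0)$ comes in one stroke from Kato's theorem: besides $\sup_{z\in\rho(D_0)}\lVert A R_0(z)B^*\rVert<1$ one must check the $D_0$-smoothness of the factors (criterion \eqref{criterion}, which in the paper reduces to the same uniform kernel bound), and then $D_V$ is \emph{similar} to $D_0$ via a bounded, boundedly invertible operator, which transfers the spectrum together with its purely continuous nature. You never verify smoothness, and your substitute arguments leave holes: the exclusion of embedded eigenvalues through $(1+K(z_0\pm i0))\phi=0$ requires a limiting-absorption step (continuity of $K(z)$ up to $\sigma(D_0)$ and a proof that an $L^2$ eigenfunction at an embedded point really solves the boundary Birman--Schwinger equation), the residual spectrum is not excluded at all, so ``$\sigma(D_V)=\sigma_\mathrm{c}(D_V)$'' is not established, and the inclusion $\sigma(D_0)\subset\sigma(D_V)$ via a trace-class resolvent difference needs care about which notion of essential spectrum is preserved for the non-self-adjoint $D_V$ and how one passes from it to the full spectrum. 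All of these issues are exactly what the similarity statement in Kato's theorem is designed to bypass, so the missing smoothness verification is not a cosmetic omission but the step that carries the conclusion.
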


The sufficient condition~\eqref{sufcon} follows 
as a consequence of a better (but less explicit) result 
which is of the same nature as~\eqref{sub}
(see Theorem~\ref{Thm.central} and Remark~\ref{Rem.sub}). 
Moreover, the spectral stability follows as a consequence
of the fact that $D_V$ 
is similar to $D_0$ 
via bounded and boundedly invertible similarity transformation
(see~Theorem~\ref{Thm.central} and Corollary~\ref{Corol.central}).
Consequently, the possibly non-self-adjoint 
operator~$D_V$ is quasi-self-adjoint
\cite{KSTV,KS-book}.

We have not been able to prove the optimality of~\eqref{sufcon}.
To this purpose, we also establish an alternative 
sufficient condition for special potentials 
having~$V_{11}$ as the only non-zero component
and this turns out to be optimal. 

Our next goal is to relate Theorem~\ref{Thm.main} 
to the non-relativistic analogue given by 
the self-adjoint Laplacian (here $m>0$)
\begin{equation}\label{operator.non}
  S_0 := -\frac{1}{2m} \frac{d^2}{dx^2}
  \qquad \mbox{in} \qquad
  L^2((0,\infty))
\end{equation}
with the domain consisting of functions 
$\psi \in H^2((0,\infty))$ satisfying 
the repulsive Robin boundary conditions 
\begin{equation}\label{bc.non}
  \psi^\prime(0) = \beta \psi(0)
  \qquad \mbox{with} \qquad
  \beta \in \left(0,+\infty\right) \,.
\end{equation}
The Dirichlet problem formally corresponds to $\beta=+\infty$.
Given $V:(0,\infty) \to \Com$ with $V \in L^1((0,\infty))$, 
the symbol~$S_V$ denotes the m-sectorial extension 
of the sum $S_0+V$ obtained via the form sum.
Spectral enclosures for $S_V$ were established already in
\cite{Frank-Laptev-Seiringer_2011,Enblom_2017}.
However, the stability of the spectrum was
not considered in these references 
and related results can be found only 
in more recent works~\cite{CK2,KLS}.

\begin{theorem}[\cite{KLS}]\label{Thm.KLS} 
Let $V \in L^1\big((0,\infty),\Com;(1+x) \, dx\big)$ satisfy
\begin{equation}\label{sufcon.non}
\int_0^\infty \int_0^\infty \lvert V(x) \rvert  
\left(\frac{2m}{\beta} + 2m \min(x,y)\right)^2 \lvert V(y) \rvert \, dx \, dy < 1 \,.
\end{equation}
Then 
$
  \sigma(S_V) 
  = \sigma_\mathrm{c}(S_V) = \sigma(S_0)
$.
\end{theorem}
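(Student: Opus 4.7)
The plan is to follow the Birman-Schwinger strategy, the non-relativistic analogue of the approach behind Theorem~\ref{Thm.main}. First I would polar-factorise $V = \sgn(V)|V|$ and set $A(x) := |V(x)|^{1/2}\,\sgn V(x)$, $B(x):=|V(x)|^{1/2}$, treated as multiplication operators, so that $V = BA$. For $z \in \rho(S_0) = \Com\setminus[0,\infty)$, introduce the Birman-Schwinger operator $K(z) := B\, R_0(z)\, A$ and define the perturbed resolvent via Kato's formula
\[
R_V(z) := R_0(z) - R_0(z)\, A\, (I+K(z))^{-1}\, B\, R_0(z),
\]
which coincides with the resolvent of the form-sum operator~$S_V$ wherever $I+K(z)$ is invertible. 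The task reduces to proving invertibility of $I+K(z)$ throughout $\rho(S_0)$ together with the analogous property at the boundary values $K(\lambda\pm i0)$, $\lambda\geq 0$; by the Birman-Schwinger principle, this rules out any spectrum of $S_V$ off $\sigma(S_0)$, including embedded eigenvalues.

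The centrepiece is a pointwise bound on the Green's function. A direct computation gives
\[
G(x,y;z) = \frac{2m\,\phi_0(x_<)\, e^{ikx_>}}{\beta - ik},
\qquad
\phi_0(x) := \cos(kx) + \tfrac{\beta}{k}\sin(kx),
\]
with $k\in\overline{\Com^+}$ determined by $k^2 = 2mz$, and one checks that $G(x,y;0)=G_0(x,y):=\tfrac{2m}{\beta}+2m\min(x,y)$. I would prove
\[
|G(x,y;z)| \leq G_0(x,y)
\qquad \text{for every } k\in\overline{\Com^+}.
\]
The efficient route is via the integral representation
\[
\phi_0(\xi) = e^{ik\xi} + (\beta-ik)\int_0^\xi \cos(kt)\,dt,
\]
obtained by substituting $\phi_0(\xi)=u(\xi)e^{ik\xi}$, noting that $u''+2iku'=0$, and integrating. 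Combining the elementary bounds $|\cos(kt)|\leq\cosh((\Im k)t)$, $\sinh(b\xi)/b\leq\xi e^{b\xi}$ for $b\geq 0$, and $|\beta-ik|\geq\beta$, a short chain of inequalities yields the claim, including the boundary limits $z=\lambda\pm i0$.

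With the kernel bound in hand, the Hilbert-Schmidt norm
\[
\|K(z)\|_{\mathrm{HS}}^2 = \int_0^\infty\!\!\int_0^\infty |V(x)|\,|G(x,y;z)|^2\,|V(y)|\,dx\,dy
\]
is dominated by~\eqref{sufcon.non} uniformly in~$z$, so $K(z)$ is Hilbert-Schmidt with $\|K(z)\|\leq\|K(z)\|_{\mathrm{HS}}<1$ for all $z\in\rho(S_0)$ and, by limiting absorption, also at $z=\lambda\pm i0$; a Neumann series then inverts $I+K(z)$. The Birman-Schwinger principle excludes point and residual spectrum of $S_V$ off $[0,\infty)$ as well as embedded eigenvalues, while compactness of $R_V(z)-R_0(z)$ preserves $\sigma_{\mathrm{ess}}=[0,\infty)$, giving $\sigma(S_V)=\sigma_c(S_V)=[0,\infty)$. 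The main technical hurdle I anticipate is the kernel bound itself: a naive estimate yields $|G(x,y;\lambda+i0)|\lesssim 1/|k|$, which diverges as $\lambda\to 0^+$, and what saves the argument is precisely the cancellation in $\sin(k\xi)/k\to\xi$ made explicit by the integral representation of $\phi_0$ above.
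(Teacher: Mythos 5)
Your core estimate is correct and nicely done: the bound $|G(x,y;z)|\le \frac{2m}{\beta}+2m\min(x,y)$ for all $k\in\overline{\Com_+}$, proved via the identity $\phi_0(\xi)=e^{ik\xi}+(\beta-ik)\int_0^\xi\cos(kt)\,dt$ together with $|\beta-ik|\ge\beta$, $|\cos(kt)|\le\cosh((\Im k)\,t)$ and $e^{-(\Im k)x_>}\sinh((\Im k)\xi)/\Im k\le\xi$, does capture the cancellation at $z=0$, and with the Hilbert--Schmidt estimate it yields the uniform smallness $\sup_z\|K(z)\|<1$ that drives everything; this is precisely the Schr\"odinger counterpart of Lemma~\ref{supremum}, and Remark~\ref{Rem.sub} (where $L_\infty=|V|^{1/2}S_0^{-1}|V|^{1/2}$) confirms that your $G_0$ is the extremal kernel. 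Where you genuinely diverge is the passage from this bound to the spectral statement. Note first that the present paper does not prove Theorem~\ref{Thm.KLS} at all -- it is quoted from \cite{KLS} -- and its own machinery for the analogous Dirac result (Theorem~\ref{Thm.central}) feeds the uniform bound into Kato's smoothness theorem (Theorem~\ref{kato}), obtaining that the perturbed operator is \emph{similar} to the unperturbed one via a bounded and boundedly invertible transformation; this delivers $\sigma=\sigma_{\mathrm{c}}=\sigma(S_0)$, absence of embedded eigenvalues and absence of residual spectrum in one stroke, at the price of verifying the smoothness criterion~\eqref{criterion}. You instead assemble the conclusion from the Birman--Schwinger principle at boundary values, compactness of the resolvent difference, and an adjoint argument. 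This route can be completed, but each ingredient needs more care than you give it: (i) the embedded-eigenvalue step (that $S_V\psi=\lambda\psi$ with $\lambda\ge0$ forces $-1\in\sigma_{\mathrm{p}}(K(\lambda\pm i0))$) is a nontrivial limiting-absorption lemma, not an automatic consequence of the principle off the spectrum; (ii) $BR_0(z)$ is in general \emph{not} Hilbert--Schmidt under $V\in L^1((1+x)\,dx)$, so to get compactness of $R_V(z)-R_0(z)$ you should factor symmetrically through $R_0(z)^{1/2}$, where $\bigl\||V|^{1/2}R_0(z)^{1/2}\bigr\|_{\mathrm{HS}}^2=\int_0^\infty|V(x)|\,G(x,x;z)\,dx$ is finite precisely because of the weight $(1+x)\,dx$; (iii) to get $\sigma_{\mathrm{c}}(S_V)=[0,\infty)$ in the sense used here you must also exclude residual spectrum \emph{inside} $[0,\infty)$, i.e.\ run the embedded-eigenvalue argument for $S_V^*=S_{\overline V}$ as well, and you should justify that Kato's resolvent formula really produces the resolvent of the form-sum operator $S_V$. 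In short: same key lemma and same quantitative condition, but your Birman--Schwinger/Weyl assembly trades the one-shot similarity conclusion of the Kato-smoothness route for extra limiting-absorption and essential-spectrum bookkeeping.
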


In this paper, we explain 
how to understand~\eqref{sufcon.non} 
as the non-relativistic limit of~\eqref{sufcon}.
We consistently introduce the speed of light~$c$
into the action of~$D_0$ 
as well as to the boundary condition~\eqref{bc.intro}
and obtain~$S_0$ as the limit 
(in a norm resolvent sense)
of a suitably renormalised~$D_0$   
after sending~$c$ to~$+\infty$.  
It turns out that the right correspondence 
between the relativistic and non-relativistic 
boundary conditions is given by
\begin{equation}\label{identification}
  \beta = 2 \cot\alpha \,.
\end{equation}
Then \eqref{sufcon.non}~is a limit of 
a $c$-dependent version of~\eqref{sufcon} as $c \to +\infty$.
From this perspective, the infinite-mass (also known as MIT)
boundary condition given by $\alpha = \frac{\pi}{4}$ 
is a relativistic version of the Robin parameter $\beta=2$.

We stress that our non-relativistic limit 
is in no mathematical contradiction with the other approaches 
\cite{Cuenin_2014,Arrizibalaga-LeTreust-Raymond_2017,
Behrndt-Frymark-Holzmann-Stelzer-Landauer},
where the (generalised) infinite-mass boundary condition 
is interpreted as a relativistic version of the Dirichlet 
boundary condition 
(\ie, $\beta=+\infty$, irrespectively of the value of~$\alpha$). 
Indeed, the argument of the other authors
is based on a different introduction 
of the speed of light into the operator.
In particular, the boundary condition is left $c$-independent
and that is why the dependence on~$\alpha$ is lost in the limit, 
see \cite[Prop.~3.1]{Cuenin_2014}. 
Our approach is based on physics arguments~\cite{Thaller}
recently adopted in a related problem in~\cite{lukas}.

The paper is organised as follows. 
In Section~\ref{Sec.pre} we write down the integral kernel
of the resolvent of the unperturbed operator~$D_0$
and establish two crucial uniform estimates 
(Lemmata~\ref{supremum} and~\ref{supremum.bis}). 
In Section~\ref{Sec.stability} we introduce the perturbed 
operator~$D_V$ and establish various sufficient conditions
which guarantee the similarity of~$D_V$ to~$D_0$.
In particular, Theorem~\ref{Thm.main} 
(based on Lemma~\ref{supremum}) is proved,
but we also establish an alternative result,
Theorem~\ref{Thm.alt} (based on Lemma~\ref{supremum.bis}). 
In Section~\ref{Sec.optimal} we prove the optimality 
of Theorem~\ref{Thm.alt} employing Dirac delta potentials.
In Section~\ref{Sec.non}, 
introducing the speed of light~$c$ in a refined way, 
we prove a convergence of~$D_0$ to~$S_0$ in a norm-resolvent sense 
as $c \to +\infty$. In particular, the compatibility of 
Theorems~\ref{Thm.main} and~\ref{Thm.KLS} is established.

\section{The free Dirac operator and its resolvent}\label{Sec.pre}
%
From now on, we abbreviate $\mathbb{R}_+ := (0,\infty)$
and consider the Hilbert space 
$L^2\big(\mathbb{R}_+,\mathbb{C}^2\big)$.
The norm and inner product in this space 
(and other functional spaces) is denoted 
by $\| \cdot \|$ and $\left(\cdot,\cdot\right)$, respectively.
The latter is assumed to be antilinear in its first argument.
Recall that the absolute value notation $|\cdot|$ is maintained 
for the vector and operator norms of~$\Com^2$.

The half-line Dirac operators~\eqref{operator}, 
subject to the generalised infinite-mass boundary conditions~\eqref{bc.intro}, 
have already been considered in \cite{Cuenin_2014,Enblom_2018}. 
We particularly follow the notation of  
the pioneering work of Cuenin~\cite{Cuenin_2014},
from where we overtake the following formula for the resolvent. 
\begin{proposition}
Let $\alpha \in (0,\frac{\pi}{2})$. 
For $z \in \rho(D_0)$ the resolvent 
$(D_0 -z)^{-1}$ acts as an integral operator 
with the integral kernel
\begin{align}\label{resolvent}
\mathcal{R}_\alpha(x,y;z) = \frac{1}{W}\left[
\psi_\alpha(x;z) \, \phi_\alpha(y;z)^\top \, \Theta(x-y) 
+ \phi_\alpha(x;z) \, \psi_\alpha(y;z)^\top \, \Theta(y-x)
\right],
\end{align}
where~$\Theta$ stands for the Heaviside function and
\begin{align*}
\psi_\alpha(x;z) &:= \exp(ik(z)x)\begin{pmatrix} i\zeta(z) \\ -1 \end{pmatrix}, \\
\phi_\alpha(y;z) &:= \begin{pmatrix} \cos(k(z)y) + \zeta(z) \cot(\alpha) \sin(k(z)y) \\ -\zeta(z)^{-1} \sin(k(z)y) + \cot(\alpha) \cos(k(z)y) \end{pmatrix},
\end{align*}
with coefficients
\begin{align*}
 k(z) := \sqrt{z^2 - m^2}, \qquad \zeta(z) := \frac{z + m}{k(z)}, \qquad W := 1 + i\zeta(z)\cot(\alpha).
\end{align*}
The square root is chosen so that $\Im[k(z)] > 0$.
\end{proposition}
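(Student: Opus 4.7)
The plan is to proceed by the standard Sturm--Liouville-type construction of the Green's function for a first-order system on the half-line with separated boundary conditions. Since $D_0 - z$ is a bijection from $\dom(D_0)$ onto $L^2(\mathbb{R}_+,\Com^2)$ for $z \in \rho(D_0)$, it suffices to exhibit two linearly independent solutions of the homogeneous equation $(D_0-z)\Psi = 0$, one adapted to each endpoint, and then verify that the candidate integral operator actually inverts $D_0 - z$ on its range.

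First I would work out the homogeneous equation explicitly. The system
\begin{equation*}
  m\Psi_1 - \Psi_2' = z\Psi_1,
  \qquad
  \Psi_1' - m\Psi_2 = z\Psi_2
\end{equation*}
yields $\Psi_1'' = -k(z)^2 \Psi_1$, so the general solution is a linear combination of $e^{\pm i k(z) x}$ in the first component, with the second component determined by $\Psi_1' = (z+m)\Psi_2$. A direct substitution then checks that the two columns $\psi_\alpha$ and $\phi_\alpha$ stated in the proposition solve the equation (this is the content of the identities $k\zeta = z+m$ and $-k/\zeta = z - m$, both following from $\zeta = (z+m)/k$ and $k^2 = z^2-m^2$). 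The choice of branch $\Im k(z) > 0$ ensures $\psi_\alpha \in L^2(\mathbb{R}_+,\Com^2)$, and evaluating at $y=0$ gives $\phi_{\alpha,1}(0) = 1$ and $\phi_{\alpha,2}(0) = \cot\alpha$, so $\phi_\alpha$ satisfies the boundary condition~\eqref{bc.intro}.

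Next I would compute the Wronskian. For the first-order system $\Psi' = M(z)\Psi$ with $M(z) = \bigl(\begin{smallmatrix} 0 & z+m \\ m-z & 0 \end{smallmatrix}\bigr)$, the quantity $\psi_{\alpha,1}\phi_{\alpha,2} - \psi_{\alpha,2}\phi_{\alpha,1}$ is constant in $x$ because $\mathop{\mathrm{tr}} M = 0$. Evaluating this constant at $x=0$ gives exactly $i\zeta(z)\cot\alpha + 1 = W$, justifying the normalisation in~\eqref{resolvent}. Linear independence of $\psi_\alpha$ and $\phi_\alpha$ is equivalent to $W \neq 0$, which holds precisely when $z \in \rho(D_0)$ (the condition $W=0$ would produce an $L^2$ eigensolution satisfying the boundary condition, contradicting~\eqref{spectrum}).

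Finally, given $f \in L^2(\mathbb{R}_+,\Com^2)$, I would set
\begin{equation*}
  u(x) := \int_0^\infty \mathcal{R}_\alpha(x,y;z)\, f(y)\, dy
  = \frac{\phi_\alpha(x;z)}{W} \int_x^\infty \psi_\alpha(y;z)^\top f(y)\, dy
  + \frac{\psi_\alpha(x;z)}{W} \int_0^x \phi_\alpha(y;z)^\top f(y)\, dy,
\end{equation*}
and verify three things: (i) $u \in H^1(\mathbb{R}_+,\Com^2)$, where the integrability at infinity uses $\Im k(z) > 0$ and the Cauchy--Schwarz inequality; (ii) differentiating in $x$ and using that both $\psi_\alpha$ and $\phi_\alpha$ solve the homogeneous equation yields $(D_0 - z) u = f$ pointwise, the contribution of the jump at $x=y$ being $W^{-1}(\phi_\alpha \psi_\alpha^\top - \psi_\alpha \phi_\alpha^\top) \sigma f = -f$ (after writing $D_0$ as $\sigma \partial_x + \ldots$ with $\sigma = \bigl(\begin{smallmatrix}0 & -1 \\ 1 & 0\end{smallmatrix}\bigr)$, so that the off-diagonal structure reproduces the Wronskian); (iii) $u$ satisfies the boundary condition~\eqref{bc.intro} at $x=0$, which reduces to the same condition for $\phi_\alpha$ since the second integral vanishes. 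The bulk of the work is the routine but careful manipulation in step (ii), where the interplay between the $\Theta$-functions and the Dirac structure of $D_0$ must produce the correct $\delta$-contribution; this is the only step that is not a direct verification.
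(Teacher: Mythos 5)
Your construction is correct, but it is a genuinely different route from the paper's. You build the Green's function in the classical Sturm--Liouville manner: the two adapted solutions $\psi_\alpha$ (square-integrable at infinity, thanks to $\Im k(z)>0$) and $\phi_\alpha$ (satisfying the boundary condition, since $\phi_{\alpha,1}(0)=1$, $\phi_{\alpha,2}(0)=\cot\alpha$), the constancy of the Wronskian for the traceless first-order system giving $W=1+i\zeta\cot\alpha$, non-vanishing of $W$ on $\rho(D_0)$ by the eigenfunction argument, and the jump computation, which indeed works out because $\psi_\alpha\phi_\alpha^\top-\phi_\alpha\psi_\alpha^\top$ is the antisymmetric matrix $-W\sigma$ with $\sigma=\bigl(\begin{smallmatrix}0&-1\\1&0\end{smallmatrix}\bigr)$, $\sigma^2=-I$, so the off-diagonal part of $D_0$ converts it into $W\,I$. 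The paper instead starts from the known whole-line Dirac resolvent, computed via $(D-z)^{-1}=(D+z)(D^2-z^2)^{-1}$, and obtains the half-line kernel by a reflection (image-charge) ansatz: it applies $(D-z)^{-1}$ to the extension $\phi_{\mathbb{A}}(x)=\phi(|x|)\Theta(x)+\mathbb{A}\phi(|x|)\Theta(-x)$ and fixes the diagonal matrix $\mathbb{A}=\sigma_3\eta(\alpha)$ by imposing the boundary condition, yielding $\mathcal{R}_\alpha(x,y;z)=\mathcal{R}(x,y;z)+\mathcal{R}(x,-y;z)\sigma_3\eta(\alpha)$, which is then matched to the stated formula. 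Your approach is more self-contained and verifies directly that the kernel inverts $D_0-z$ on the domain with the correct boundary condition; the paper's approach is chosen deliberately because the decomposition into whole-line kernel plus reflected term is exactly the structure exploited later in the non-relativistic limit of Section~\ref{Sec.non}. Two small points to tighten when writing your argument up: the $L^2\to L^2$ boundedness of the candidate integral operator needs slightly more than Cauchy--Schwarz (a Schur/Young-type bound using the exponential decay $e^{-\Im k\,|x-y|}$ and $e^{-\Im k\,(x+y)}$ of the kernel, or a density argument using that $z\in\rho(D_0)$ already guarantees a bounded inverse), and in step (ii) the jump term should be stated with the matrix $\sigma$ acting on the left of the antisymmetric combination as it arises from $D_0$ --- harmless here only because that combination is itself a multiple of $\sigma$.
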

\begin{proof}
For the reader's convenience, we give a short proof 
of the resolvent formula. 
Moreover, the proof provides an insight into the structure,
which will be useful when deriving 
the non-relativistic limit in Section~\ref{Sec.non}.

Let us consider the whole-line Dirac operator~$D$ 
in $L^2(\mathbb{R}, \mathbb{C}^2)$,
which acts as~$D_0$ in~\eqref{operator},
subject to continuity boundary conditions at zero,
namely the domain of~$D$ is $H^1(\mathbb{R}, \mathbb{C}^2)$.  
Using the traditional trick 
$
  (D - z)^{-1} 
  = (D + z) (D^2 - z^2)^{-1}
$
together with the fact that the resolvent kernel 
of the Schr\"odinger operator
$D^2 = (-\Delta + m^2) I_{\Com^2}$ is well known, 
one easily verifies that the integral kernel 
of $(D - z)^{-1}$ reads 
\begin{align}\label{dirac resolvent}
\mathcal{R}(x,y;z) = \frac{1}{2} 
\begin{pmatrix}
i\zeta(z) & \sgn(x-y)\\
-\sgn(x-y) & i\zeta^{-1}(z)
\end{pmatrix}
\exp(ik(z)\lvert x-y \rvert)
\,.
\end{align}

Let $\phi \in L^2(\mathbb{R}_+,\mathbb{C}^2)$ be a function on the half-line and $\mathbb{A} := \mbox{diag}(\mu_1,\mu_2) \in \mathbb{C}^{2,2}$ be a diagonal matrix. 
For every $x \in \Real$,
we define $\phi_\mathbb{A}(x) := \phi(\lvert x \rvert)\Theta(x) +\mathbb{A} \phi(\lvert x \rvert)\Theta(-x)$,
which is a function in $L^2(\mathbb{R},\mathbb{C}^2)$
but not necessarily in $H^1(\mathbb{R},\mathbb{C}^2)$. 
Then 
\begin{align*}
\left((D-z)^{-1}\phi_\mathbb{A}\right)(x) 
&= \int_\mathbb{R} \mathcal{R}(x,y;z)\phi_\mathbb{A}(y) dy 
\\
&= \int_0^{\infty} ( \mathcal{R}(x,y;z) 
+ \mathcal{R}(x,-y;z)\mathbb{A})~\phi(y)dy 
=: 
\begin{pmatrix}
\xi_1(x)\\
\xi_2(x)
\end{pmatrix}.
\end{align*}
Demanding the boundary condition 
\begin{align*}
\xi_1(0)\cot(\alpha) = \xi_2(0) 
\end{align*}
yields the equation
\begin{align*}
\int_0^{\infty} [&\left(\mathcal{R}_{11}(0,y;z)
+ \mu_1 \mathcal{R}_{11}(0,-y;z)\right)\cot(\alpha) 
- \mathcal{R}_{21}(0,y;z) 
- \mu_1 \mathcal{R}_{21}(0,-y;z)]~\phi_1(y) dy \\
= \int_0^{\infty} [&\mathcal{R}_{22}(0,y;z)
+ \mu_2 \mathcal{R}_{22}(0,-y;z) 
- \left(\mathcal{R}_{12}(0,y;z) 
+ \mu_2 \mathcal{R}_{12}(0,-y;z) \right)\cot(\alpha) ]~\phi_2(y) dy .
\end{align*}
Since this equation has to be satisfied 
for every $\phi \in L^2(\mathbb{R}_+,\mathbb{C}^2)$, 
for the special choice $\phi_1 = 0$, respectively $\phi_2 = 0$, 
it breaks down into two independent conditions
\begin{align*}
\left(\mathcal{R}_{11}(0,y;z)+ \mu_1 \mathcal{R}_{11}(0,-y;z)\right)\cot(\alpha) &= \mathcal{R}_{21}(0,y;z) + \mu_1 \mathcal{R}_{21}(0,-y;z),\\
\left(\mathcal{R}_{12}(0,y;z) + \mu_2 \mathcal{R}_{12}(0,-y;z) \right)\cot(\alpha) &= \mathcal{R}_{22}(0,y;z)+ \mu_2 \mathcal{R}_{22}(0,-y;z),
\end{align*}
to be satisfied for all $y \in \mathbb{R}_+$. 
From the expression \eqref{dirac resolvent} we obtain a unique solution 
for $\mu_1,\mu_2$ leading to
$$
  \mathbb{A} =
  \sigma_3 \eta(\alpha)
  \qquad \mbox{with} \qquad
  \eta(\alpha) :=
  \frac{1 -i \zeta(z)\cot(\alpha)}{1 + i \zeta(z)\cot(\alpha)}
  \qquad \mbox{and} \qquad
  \sigma_3 :=
  \begin{pmatrix}
    1 & 0 \\
    0 & -1
  \end{pmatrix}
  \,.
$$
 
The integral kernel of the resolvent 
$\left(D_0 - z \right)^{-1}$ therefore reads
\begin{align}\label{resolvent-matrix form}
\mathcal{R}_\alpha(x,y;z) = \mathcal{R}(x,y;z) + \mathcal{R}(x,-y;z)\sigma_3\eta(\alpha),
\end{align}
for all $x,y \in \mathbb{R}_+$. 
It is straightforward to check that 
\begin{align*}
\mathcal{R}_\alpha(x,y;z) = R_\alpha(x,y;z) \, \Theta (x-y) 
+ R_\alpha(y,x;z)^\top \, \Theta(y-x)
\end{align*}
with
\begin{align*}
[R_\alpha(x,y;z)]_{11} &= \frac{i\zeta(z)}{1 + i\zeta(z)\cot(\alpha)}\exp(ik(z)x) \left[ \cos(k(z)y) + \zeta(z)\cot(\alpha)\sin(k(z)y) \right], \\ 
[R_\alpha(x,y;z)]_{12} &= \frac{i\zeta(z)}{1 + i\zeta(z)\cot(\alpha)}\exp(ik(z)x)\left[\cot(\alpha)\cos(k(z)y) - \zeta(z)^{-1}\sin(k(z)y) \right],\\ 
[R_\alpha(x,y;z)]_{21} &= \frac{-1}{1 + i\zeta(z)\cot(\alpha)}\exp(ik(z)x)\left[ \cos(k(z)y) + \zeta(z)\cot(\alpha)\sin(k(z)y) \right],\\ 
[R_\alpha(x,y;z)]_{22} &= \frac{-1}{1 + i\zeta(z)\cot(\alpha)}\exp(ik(z)x)\left[\cot(\alpha)\cos(k(z)y) - \zeta(z)^{-1}\sin(k(z)y) \right].
\end{align*}
This coincides with~\eqref{resolvent}.
\end{proof}

The following lemma is the main technical result of this paper.
Here it is fundamental that the boundary parameter~$\alpha$ 
is assumed to belong to the interval $(0,\frac{\pi}{2})$, 
\cf~\eqref{bc.intro}.
Indeed, $D_0$ possesses a discrete eigenvalue
in the interval $(-m,m)$ if $\alpha \in (\frac{\pi}{2},\pi)$
and $\mathcal{R}_\alpha(x,y;z)$
has a non-removable singularity at $z = \pm m$ 
even if $\alpha \in \{0,\frac{\pi}{2}\}$
(zigzag boundary conditions). 
Recall the notation~$q$ introduced in Theorem~\ref{Thm.main}.

\begin{lemma}\label{supremum}
Let $\alpha \in (0,\frac{\pi}{2})$. Then
\begin{align*}
\sup_{z\in\rho(D_0)} \lvert \mathcal{R}_\alpha(x,y;z) \rvert^2 
= 1 + \big(q + 2m \min(x,y)\big)^2 \,.
\end{align*}
\end{lemma}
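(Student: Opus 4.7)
The plan is a subharmonicity argument that reduces the supremum to boundary values on $\sigma(D_0)$, followed by an explicit evaluation at the two spectral thresholds $z=\pm m$. By symmetry one may assume $x\ge y$, so that $\min(x,y)=y$.

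From~\eqref{resolvent} the matrix $\mathcal{R}_\alpha(x,y;z)$ is rank one for every $z\in\rho(D_0)$, hence its operator norm coincides with its Hilbert--Schmidt norm,
\begin{equation*}
  |\mathcal{R}_\alpha(x,y;z)|^2 = \frac{|\psi_\alpha(x;z)|^2\,|\phi_\alpha(y;z)|^2}{|W(z)|^2}.
\end{equation*}
Being a sum of the squared moduli of four functions holomorphic in $z\in\rho(D_0)$, this expression is subharmonic on the resolvent set, continuous up to the boundary $\sigma(D_0)$, and bounded at infinity. The maximum principle therefore identifies $\sup_{z\in\rho(D_0)}|\mathcal{R}_\alpha(x,y;z)|^2$ with the supremum of the boundary values on $\sigma(D_0)$. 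On the spectrum $k$ and $\zeta$ are real and $|e^{ikx}|=1$, so the dependence on $x$ drops out, which explains the appearance of only $\min(x,y)$ in the answer.

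An elementary computation then yields the boundary value
\begin{equation*}
  |\mathcal{R}_\alpha|^2 = 1 + \zeta^2\cos^2\Theta + \zeta^{-2}\sin^2\Theta, \qquad \Theta := ky - \arctan(\zeta\cot\alpha),
\end{equation*}
with $\zeta$ and $k$ linked through $\zeta k = z+m$, equivalently $k=2m\zeta/(\zeta^2-1)$. Taylor expansion at the two thresholds $z\to m^+$ (so $\zeta\to+\infty$) and $z\to -m^-$ (so $\zeta\to 0^+$) yields, respectively, $1+(\tan\alpha+2my)^2$ and $1+(\cot\alpha+2my)^2$, whose maximum is $1+(q+2my)^2$ and provides the lower bound on the supremum.

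The remaining step is the matching upper bound $\zeta^2\cos^2\Theta+\zeta^{-2}\sin^2\Theta\le(q+2my)^2$ for every real $\zeta>0$. The involution $(\zeta,\cot\alpha)\mapsto(1/\zeta,\tan\alpha)$ preserves both sides, so one may restrict to $\zeta>1$. In the regime $\zeta\le q+2my$ the crude estimate $\zeta^2\cos^2\Theta+\zeta^{-2}\sin^2\Theta\le\zeta^2$ suffices, whereas for $\zeta>q+2my$ one has to combine the identity $\sin\Theta=\bigl(\sin(ky)-\zeta\cot\alpha\cos(ky)\bigr)/\sqrt{1+\zeta^2\cot^2\alpha}$ with the explicit relation $k=2m\zeta/(\zeta^2-1)$ to extract a cancellation between the trigonometric factors. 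I expect this last algebraic step---whose inequality is saturated only asymptotically at the thresholds---to be the principal obstacle of the proof.
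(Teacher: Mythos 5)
Your reduction step is sound in outline and essentially parallels the paper: the rank-one structure of the kernel in \eqref{resolvent}, the identification of the operator norm with $|\psi_\alpha(x;z)|\,|\phi_\alpha(y;z)|/|W(z)|$, and the use of a maximum principle (the paper invokes a Banach-space-valued maximum modulus principle, you use subharmonicity of the squared Hilbert--Schmidt norm) to push the supremum to the spectrum and to complex infinity. Your boundary parametrisation $1+\zeta^2\cos^2\Theta+\zeta^{-2}\sin^2\Theta$ with $\Theta=ky-\arctan(\zeta\cot\alpha)$ is correct, and your threshold limits at $z\to\pm m$ reproduce the paper's values $1+(\cot\alpha+2my)^2$ and $1+(\cot(\alpha)^{-1}+2my)^2$, which gives the lower bound. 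Two smaller points: for the maximum-principle step on the unbounded set $\rho(D_0)$ you must actually control the behaviour as $z\to\infty$, not just assert boundedness (the paper computes this limit to be $2$, which is dominated by $1+(q+2m\min(x,y))^2$ because $q\ge 1$); and on the branch $z\le -m$ one has $\zeta<0$, so the involution argument reducing to $\zeta>1$ needs to be stated so as to cover that case as well.

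The genuine gap is the matching upper bound on the spectrum, i.e.\ the inequality $\zeta^2\cos^2\Theta+\zeta^{-2}\sin^2\Theta\le\big(q+2m\min(x,y)\big)^2$ under the constraint $k=2m\zeta/(\zeta^2-1)$. This is the entire substance of the lemma, and you leave it unproven: in the regime $\zeta\le q+2my$ your crude bound $\le\zeta^2$ works, but the supremum is approached only as $\zeta\to\infty$, i.e.\ precisely in the regime $\zeta>q+2my$ that you defer to a hoped-for ``cancellation between the trigonometric factors''. Naive estimates there (e.g.\ $|\sin(ky)|\le ky$ together with $k=2m\zeta/(\zeta^2-1)$) give $\zeta|\cos\Theta|\le \tan\alpha+2my\,\zeta^2/(\zeta^2-1)$, which overshoots the target, so the inequality really does require a quantitative argument that you have not supplied; as you note yourself, this is ``the principal obstacle'', which means the proof is incomplete at its key step. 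For comparison, the paper avoids proving a pointwise inequality altogether: it shows that the restriction $u\mapsto\rchi(u,y)$ to $\sigma(D_0)$ has no interior critical points, so its supremum must be among the limits at $u\to\pm m$ and $u\to\pm\infty$, and then compares those four values. If you want to complete your route, you must either prove the pointwise bound in the regime $\zeta>q+2my$ (e.g.\ by a monotonicity argument in $\zeta$ along each spectral branch) or switch to an endpoint argument of the paper's type.
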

\begin{proof}
The norm of the resolvent kernel $\mathcal{R}_\alpha(x,y;z)$ as an operator $\mathbb{C}^2 \rightarrow \mathbb{C}^2$ reads
\begin{align*}
\lvert \mathcal{R}_\alpha(x,y;z) \rvert = \frac{1}{\lvert W(z) \rvert} \left[ \lvert \psi_\alpha(x;z)\rvert \lvert \phi_\alpha(y;z)\rvert \Theta(x-y) + \lvert \psi_\alpha(y;z)\rvert \lvert \phi_\alpha(x;z)\rvert \Theta(y-x) \right].
\end{align*}
Without loss of generality, we shall further consider only the case $x > y$. 
To simplify further expressions, let us introduce the notation
\begin{align*}
\eta_1(z) &:= \frac{1}{4} \left(1 + \lvert \zeta(z) \rvert^{-2}\right) \lvert 1 - i\zeta(z)\cot(\alpha)\rvert^2 ,\\
\eta_2(z) &:= \frac{1}{4} \left(1 + \lvert \zeta(z) \rvert^{-2}\right) \lvert 1 + i\zeta(z)\cot(\alpha)\rvert^2 ,\\
\eta_3(z) &:= \frac{1}{2}\left(1 - \cot^2(\alpha)\lvert \zeta(z) \rvert^2\right),\\
\eta_4(z) &:= \frac{1}{2}\left(2\Re[\zeta(z)]\cot(\alpha)\right).
\end{align*}
A straightforward calculation yields
\begin{align*}
\lvert \psi_\alpha(x;z) \rvert^2 = \ & \exp(-2\Im[k(z)]x) \left(\lvert\zeta(z)\rvert^2 + 1\right), \\
\lvert \phi_\alpha(y;z) \rvert^2 =\ & \eta_1(z) \exp(-2\Im[k(z)]y) + \eta_2(z) \exp(2\Im[k(z)]y) \\
&+ \left(1 - \lvert \zeta(z) \rvert^{-2}\right) \left(\eta_3(z) \cos(2\Re[k(z)]y) + \eta_4(z) \sin(2\Re[k(z)]y) \right).
\end{align*}

Since the sums, products, and compositions of holomorphic functions are holomorphic, it is obvious that the resolved kernel $\mathcal{R}_\alpha(x,y;z)$ is a holomorphic function of the spectral parameter~$z$. Therefore, the supremum of its modulus cannot be achieved in the resolvent set. This fact follows from the maximum modulus principle stated in~\cite{max} for general Banach space-valued maps. The fact that $\mathbb{C}^2$ equipped with the standard norm satisfies the appropriate assumptions follows from~\cite{convex}.

On the other hand, the supremum itself has to exist. Therefore, either the supremum lies at complex infinity or it is achieved somewhere 
in the spectrum~\eqref{spectrum}. 
We shall show that the latter is true. First, we show that $\lvert \mathcal{R}_\alpha(x,y;z) \rvert$ can be uniformly bounded as $z \rightarrow \infty$. 

Estimating the exponential functions one has
\begin{align*}
\lim \limits_{z \to \infty} \lvert \mathcal{R}_\alpha(x,y;z) \rvert^2 \Theta(x-y) = \ & \lim \limits_{z \to \infty} \frac{1}{\lvert W \rvert^2} \lvert \psi_\alpha(x;z)\rvert^2 \lvert \phi_\alpha(y;z)\rvert^2 \Theta(x-y)\\ 
\leq \ &  \lim \limits_{z \to \infty} \frac{\left(1+ \lvert \zeta(z) \rvert^2 
\right)}{\lvert W \rvert^2} \left(\eta_1(z) + \eta_2(z) \right) \\
&+ \lim \limits_{z \to \infty} \frac{\left(1 - \lvert \zeta(z) \rvert^{-2}\right)}{\lvert W \rvert^2} \left(\cos(2\Re[k(z)]y) \eta_3(z) + \sin(2\Re[k(z)]y) \eta_4(z)\right) \\
= \ & 2
\end{align*}
The last equality follows from the facts that 
$
  \lvert 1 \pm i\zeta(z)\cot(\alpha)\rvert^2 
  \to 1 + \cot^2(\alpha)
$ 
and 
$
   \lvert \zeta(z)\rvert^2 \to 1
$
as $z \to \infty$,
which is a consequence of the asymptotics of $k(z)$ and $\zeta(z)$. 
Indeed, it can be easily checked that, 
under the notation $z = z_1 + i z_2$ with $z_1,z_2 \in \Real$, 
it holds that
\begin{align*}
k(z) = \frac{z_1 z_2}{a(z)} + i a(z),    
\end{align*}
with
\begin{align*}
    a(z) := \frac{1}{\sqrt{2}} \left( z_2^2 - z_1^2 + m^2 + \sqrt{( z_2^2 - z_1^2 + m^2)^2 + 4 z_1^2 z_2^2} \right)^{1/2}.
\end{align*}
Furthermore, for $z_2 \neq 0$, by simple calculations one finds that, 
    \begin{itemize}
        \item $\lim \limits_{\lvert z_2 \rvert \to + \infty} a(z) = +\infty \quad \mbox{as} \quad o(\lvert z_2 \rvert)$, 
        \item $\lim \limits_{\lvert z_1 \rvert \to + \infty} a(z) = \lvert z_2 \rvert,$
        \item $\lim \limits_{\lvert z_1 \rvert \to + \infty} \Im(\zeta(z)) = \lim \limits_{\lvert z_2 \rvert \to + \infty} \Im(\zeta(z)) = 0.$
    \end{itemize}

Now, we investigate the behaviour of the restriction $\lvert \mathcal{R}_\alpha(x,y;z) \rvert^2$ to the spectrum. That is,
$u := z \in \sigma(D_0) = (-\infty,-m] \cup [m,+\infty)$.
The coefficients $k(u)$ and $\zeta(u)$ are now purely real, and the restriction of $\lvert \mathcal{R}_\alpha(x,y;z) \rvert^2$ reads
\begin{align*}
\lvert \mathcal{R}_\alpha(x,y;u) \rvert^2 \Theta(x-y) 
= \ &  \frac{2u^2}{u^2 - m^2} + \cos(2k(u)y)\frac{u-m-\cot^2(\alpha)(u+m)}{u-m + \cot^2(\alpha)(u+m)} \frac{2mu}{u^2 - m^2} \\
 & + \sin(2k(u)y)\frac{\cot(\alpha)4m}{\sqrt{u^2 - m^2}} \frac{1}{u-m + \cot^2(\alpha)(u+m)} =: \rchi(u,y).
\end{align*}
We claim that the function $\rchi(u,y)$ has no local extremes for $u \in \sigma(\mathcal{D}_\alpha)$. Indeed, the derivative
\begin{equation}
\begin{aligned}
\frac{\partial\rchi}{\partial u}(u,y) 
= \ & \xi_1(u) + \xi_2(u)\cos(2k(u)y) + \xi_3(u)\cos(2k(u)y)y \\
&+ \xi_4(u)\sin(2k(u)y) + \xi_5(u)\sin(2k(u)y)y 
\,,
\end{aligned}
\end{equation}
with some functions $\xi_1,\dots,\xi_5$ 
that we do not need to make explicit now,
can be viewed as the linear combination of linearly independent functions in variable $y$ for every fixed $u \in \sigma(D_0)$. 
Therefore, the only possible solution of the equation  
when the derivative is put equal to zero
is $\xi_i = 0$ for every $i \in \lbrace 1, \ldots, 5 \rbrace$. 
Since 
\begin{align*}
  \xi_1(u) = \frac{4mu}{u^2-m^2} \,,
\end{align*}
we conclude that $u = 0 \not \in (-\infty,-m] \cup [m,+\infty)$ 
if $m>0$ or that $\xi_1(u)=0$ cannot be satisfied if $m=0$.

Since we have established that $u \mapsto \rchi(u,y)$ has no local extremes,
its supremum must the maximum of the limits at the ``boundaries''. 
These are 
\begin{align*}
&\lim \limits_{u \to \pm \infty} \rchi(u,y) = 2,\\
&\lim \limits_{u \to -m} \rchi(u,y) 
= 1 + \big(\cot(\alpha) + 2my\big)^2, \\
&\lim \limits_{u \to m} \rchi(u,y) 
= 1 + \big({\cot(\alpha)}^{-1} + 2my\big)^2.
\end{align*}
Thus, we conclude that the claim of the lemma holds true.
\end{proof}

As an alternative to Lemma~\ref{supremum},
we shall use the following uniform bound
on the first component of the resolvent kernel.  
\begin{lemma}\label{supremum.bis}
Let $\alpha \in (0,\frac{\pi}{2})$. Then
\begin{align*} 
  \sup_{z\in\rho(D_0)} 
  \lvert \mathcal{R}_{\alpha,11}(x,y;z) \rvert 
  = \max\left(1, \frac{1}{\cot(\alpha)} + 2m \min(x,y)\right).
\end{align*}
\end{lemma}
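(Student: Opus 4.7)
The plan is to mimic the proof of Lemma~\ref{supremum} closely, exploiting that the $(1,1)$ entry of the resolvent kernel has a much simpler structure than the full operator norm treated there. Writing $y_0 := \min(x,y)$ and $x_0 := \max(x,y)$, my first observation is that since the $(1,1)$ entry of a matrix coincides with the $(1,1)$ entry of its transpose, formula~\eqref{resolvent} collapses to the unified expression
\begin{equation*}
\mathcal{R}_{\alpha,11}(x,y;z) = \frac{i\zeta(z)}{W(z)}\,e^{ik(z)x_0}\bigl[\cos(k(z)y_0) + \zeta(z)\cot(\alpha)\sin(k(z)y_0)\bigr],
\end{equation*}
valid for either relative ordering of $x$ and $y$.

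Being holomorphic on $\rho(D_0)$, the function $z \mapsto \mathcal{R}_{\alpha,11}(x,y;z)$ attains its supremum only on the topological boundary of $\rho(D_0)$ by the maximum modulus principle, so as in Lemma~\ref{supremum} the search is reduced to the spectrum $\sigma(D_0)$ together with the point at infinity. Restricting to real $u$ with $|u| \geq m$ (with $k$ and $\zeta$ taken real via the boundary-value convention from the upper half-plane, exactly as in Lemma~\ref{supremum}), one obtains the explicit formula
\begin{equation*}
|\mathcal{R}_{\alpha,11}(x,y;u)|^2 = \frac{\zeta^2}{1 + \zeta^2\cot^2(\alpha)}\bigl[\cos(ky_0) + \zeta\cot(\alpha)\sin(ky_0)\bigr]^2.
\end{equation*}
The four boundary limits are then computed as follows: at $u \to m$, using $\zeta k = u + m \to 2m$ together with $\zeta \to +\infty$, the bracket tends to $1 + 2my_0\cot(\alpha)$ and the prefactor tends to $\tan^2(\alpha)$, giving the limit $(\tan(\alpha) + 2my_0)^2 = (1/\cot(\alpha) + 2my_0)^2$; at $u \to -m$, the factor $\zeta \to 0$ forces the limit $0$; and as $|u| \to \infty$, the asymptotics $\zeta \to \pm 1$ combined with the amplitude estimate $[\cos(ky_0) + \zeta\cot(\alpha)\sin(ky_0)]^2 \leq 1 + \zeta^2\cot^2(\alpha)$ yield $\limsup \leq 1$.

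The hardest step, inherited from Lemma~\ref{supremum}, is the exclusion of interior critical points on the spectrum. I would carry over the linear-independence argument used there: the derivative of $|\mathcal{R}_{\alpha,11}(x,y;u)|^2$ with respect to $u$ is a linear combination of $1$, $\cos(2ky_0)$, $\sin(2ky_0)$, $y_0\cos(2ky_0)$, $y_0\sin(2ky_0)$ with coefficients $\xi_i(u)$ depending only on $u$; since these five functions are linearly independent in $y_0$, vanishing of the derivative would force every $\xi_i(u)=0$, and the analogue of the explicit form of $\xi_1$ from Lemma~\ref{supremum} yields the required contradiction. Taking the maximum among the four boundary values and extracting the square root then produces $\max\bigl(1,\,1/\cot(\alpha) + 2my_0\bigr)$, which is the claimed identity.
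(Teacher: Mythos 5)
Your overall strategy is the same as the paper's: reduce to boundary behaviour via the maximum modulus principle, carry over the no-interior-extremes argument from Lemma~\ref{supremum}, and compute the limits at $\pm m$ and at infinity. The unified formula in terms of $x_0=\max(x,y)$, $y_0=\min(x,y)$ is a tidy way to avoid the case distinction, and your limits at $u\to\pm m$ agree with the paper's $l_{\pm m}$. However, the treatment of infinity has a genuine gap, in two respects. First, the reduction leaves you with the point at infinity of $\rho(D_0)$, i.e.\ $z\to\infty$ through \emph{complex} values, not only $u\to\pm\infty$ along the spectrum. Your amplitude estimate $[\cos(k y_0)+\zeta\cot(\alpha)\sin(k y_0)]^2\le 1+\zeta^2\cot^2(\alpha)$ is valid only for real $k$; for complex $k$ the functions $\cos(k y_0)$, $\sin(k y_0)$ grow like $\exp(\lvert\Im k\rvert y_0)$, and one must use the prefactor $\exp(-2\Im(k) x_0)$ with $x_0\ge y_0$ to compensate. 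This is exactly the part the paper flags as ``more delicate'' (the oscillatory term does not vanish at complex infinity) and settles by the bound $\tfrac12 \exp(-2\gamma_\infty^I x)\bigl(\cosh(2\gamma_\infty^I y)+1\bigr)\le 1$ along arbitrary curves tending to infinity. Your proposal never addresses this case.

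Second, you only establish $\limsup_{\lvert u\rvert\to\infty}\lvert\mathcal{R}_{\alpha,11}(x,y;u)\rvert^2\le 1$, whereas the lemma asserts an \emph{equality} for the supremum. In the regime $1/\cot(\alpha)+2m\min(x,y)<1$ the claimed supremum is $1$, so you must show that the value $1$ is actually approached; with only ``$\le 1$'' at infinity and limits $0$ and $\bigl(1/\cot(\alpha)+2m y_0\bigr)^2$ at $\mp m$, you obtain merely an upper bound there. The missing ingredient is the exact computation of the limsup via \eqref{limsup}: on the spectrum the constant term tends to $\tfrac12$ and the oscillatory amplitudes $\nu,\mu$ satisfy $\nu^2+\mu^2\to\tfrac14$ (indeed $\bigl(\tfrac{1-\cot^2(\alpha)}{2(1+\cot^2(\alpha))}\bigr)^2+\tfrac{\cot^2(\alpha)}{(1+\cot^2(\alpha))^2}=\tfrac14$), so $l_{\pm\infty}=1$ exactly, as in the paper. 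Both gaps are fixable with the paper's tools, but as written the proposal proves only the inequality $\sup\le\max\bigl(1,1/\cot(\alpha)+2m\min(x,y)\bigr)$ together with the lower bound coming from the limit at $+m$, not the stated identity. (You also inherit, as the paper does, the linear-independence exclusion of interior critical points from Lemma~\ref{supremum}; that step is identical in both arguments.)
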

\begin{proof}
It is not difficult to see that 
\begin{align*}
\lvert \mathcal{R}_{\alpha,11}(x,y;z) \rvert^2 \Theta(x-y) = \frac{\lvert \zeta(z) \rvert^2 \exp(-2\Im(k(z))x)}{\lvert 1 + i\zeta(z) \cot(\alpha) \rvert^2} \left( \Lambda_1(y;z) + \Lambda_2(y;z) \right), 
\end{align*}
where for $x > y > 0$ we have
\begin{align*}
\Lambda_1(x,y;z) &:= \frac{1}{4} \left[\exp(-2\Im(k(z))y) \lvert 1 - i\zeta(z)\cot(\alpha) \rvert^2 + \exp(2\Im(k(z))y) \lvert 1 + i\zeta(z)\cot(\alpha) \rvert^2 \right], \\
\Lambda_2(x,y;z) &:= \frac{1}{2} \left[\cos(2\Re(k(z))y) \left( 1 - 
\lvert \zeta(z) \rvert^2 \cot^2(\alpha) \right) + \sin(2\Re(k(z))y) 2\cot(\alpha) \Re(\zeta(z)) \right].
\end{align*}
Adopting the approach in the general setting of Lemma~\ref{supremum}, 
we see that
\begin{align*}
\sup_{z\in\rho(D_0)} \lvert \mathcal{R}_{\alpha,11}(x,y;z) \rvert = \max(l_\infty, l_{\pm \infty}, l_{\pm m}),
\end{align*}
where we denote
\begin{align*}
l_\infty &:= \limsup \limits_{z \to \infty} 
\, \lvert \mathcal{R}_{\alpha,11}(x,y;z) \rvert^2 \, \Theta(x-y), \\
l_{\pm \infty} &:= \limsup \limits_{z \to \pm \infty} \,
\lvert \mathcal{R}_{\alpha,11}(x,y;z) \rvert^2 \, \Theta(x-y), \\
l_{\pm m} &:= \lim \limits_{z \to \pm m} 
\lvert \mathcal{R}_{\alpha,11}(x,y;z) \rvert^2 \, \Theta(x-y).
\end{align*}

In the present case, 
the problem is more delicate, since the alternating part $\Lambda_2$ does not vanish as $z$ tends to a complex infinity. 
However, in general for any $\nu, \mu \in \mathbb{R}$ it holds that
\begin{align}\label{limsup}
\limsup \limits_{x \to +\infty} 
\left(\nu \cos(x) + \mu \sin(x) \right)
= \sqrt{\nu^2 + \mu^2}. 
\end{align}

\paragraph*{Calculation of $l_\infty$.}
The fact that $z$ tends to a complex infinity means that either $\lvert \Im(z) \rvert  \to +\infty$ or $\lvert \Re(z) \rvert  \to +\infty$. 
The only relevant case is the latter one, since $\lvert \mathcal{R}_{\alpha,11}(x,y;z) \rvert^2 \Theta(x-y) \to 0$ whenever $\lvert \Im(z) \rvert$ tends to $+\infty$, see the asymptotics of $k(z)$ and $\zeta(z)$ in the proof of 
Lemma~\ref{supremum}.
Combining \eqref{limsup} and the fact pointed out above, one can see that 
for any curve $\gamma : \mathbb{R}_+ \to \rho(D_0)$ such that $\lim \limits_{t \to +\infty} \lvert \Re(\gamma(t)) \rvert = +\infty$ we have
\begin{align*}
\limsup \limits_{t \to +\infty} \, \lvert \mathcal{R}_{\alpha,11}(x,y;\gamma(t)) \rvert^2 \Theta(x-y) \leq \frac{\exp(-2 \gamma_\infty^I  x)}{2} \left(\cosh(2 \gamma_\infty^I y) + 1 \right) \leq 1, 
\end{align*}
where $\gamma_\infty^I := \liminf \limits_{t \to \infty} \lvert \Im(\gamma(t)) \rvert $ which is, without loss of generality, assumed to be finite. Thus we conclude with $l_\infty \leq 1$.

\paragraph*{Calculation of $l_{\pm\infty}$.}
Restricted to the spectrum, \ie\ $z = u \in (-\infty, -m] \cup [m, +\infty)$, the absolute value of $\mathcal{R}_{\alpha,11}(x,y;u)$ reads
\begin{align*}
\lvert \mathcal{R}_{\alpha,11}(x,y;u) \rvert^2 \, \Theta(x-y) 
= \ & \frac{1}{2} \frac{u+m}{u-m} + \cos(2k(u)y) \frac{1}{2} \frac{u-m-\cot^2(\alpha)(u+m)}{u-m + \cot^2(\alpha)(u+m)} \frac{u+m}{u-m} 
\\
&+ \sin(2k(u)y)\frac{\cot(\alpha)(u+m)}{\sqrt{u^2 - m^2}} \frac{1}{u-m + \cot^2(\alpha)(u+m)}.    
\end{align*}
Once again, from \eqref{limsup} it follows that
\begin{align*}
l_{\pm \infty} = \limsup \limits_{u \to \pm \infty} \,
\lvert \mathcal{R}_\alpha(x,y;u) \rvert^2 \, \Theta(x-y) = 1.    
\end{align*}

\paragraph*{Limits at $\pm m$.}
By a direct calculation we obtain
\begin{align*}
l_{-m} &= \lim \limits_{u \to -m} \lvert \mathcal{R}_\alpha(x,y;u) \rvert^2
\, \Theta(x-y) = 0 \,, \\
l_{m} &= \lim \limits_{u \to m} \lvert \mathcal{R}_\alpha(x,y;u) \rvert^2
\, \Theta(x-y) = \left( \frac{1}{\cot(\alpha)} + 2my \right)^2
\,.
\end{align*}

The proof is completed by comparing the values 
of $l_\infty, l_{\pm\infty}, l_{\pm m}$.
\end{proof}
%

\section{Perturbations and the stability of the spectrum}\label{Sec.stability}
%
Given a potential $V:\Real_+ \to \Com^{2,2}$,
we would like to consider a closed realisation~$D_V$
of the perturbed operator $D_0 + V$.
Already the latter is a closed operator, 
with $\dom(D_0 + V) = \dom(D_0)$,
provided that~$V$ is bounded 
or, more generally, relatively bounded with respect to~$D_0$
with the relative bound less than~$1$.
This results in the requirement 
$V \in (L^2 + L^\infty)(\Real_+,\Com^{2,2})$.

To include a larger class of potentials,
it is possible to introduce~$D_V$
via the pseudo-Friedrichs extension~\cite{Birman}. 
This approach leads to 
$V \in (L^{1+\delta} + L^\infty)(\Real_+,\Com^{2,2})$
with any positive~$\delta$.
The advantage is a quite explicit form-wise interpretation
of~$D_V$ and the strong apparatus 
of the Birman--Schwinger principle developed in~\cite{Birman}. 
However, the desired setting $\delta=0$ does not seem to be reachable.

To reach the optimal scale $V \in (L^{1} + L^\infty)(\Real_+,\Com^{2,2})$,
it seems necessary to introduce~$D_V$  
via Kato's resolvent formula
\cite{katoBS,Cuenin-Laptev-Tretter_2014}.
We adopt this approach in this paper.
The abstract Kato's theorem, 
in a weaker form sufficient to our purposes,
reads as follows.

\begin{theorem}[{\cite{katoBS}}]\label{kato}
Let $H_0$ be a self-adjoint operator in a Hilbert space $\mathcal{H}$ and suppose that $A,B$ are closed operators in~$\mathcal{H}$ 
with $\dom(H_0) \subset \dom(A) \cap \dom(B)$
which are $H_0$-smooth and such that  
\begin{equation}\label{smooth}
        \sup \limits_{z \in \rho(H_0)} \lVert A(H_0 - z)^{-1}B^* \rVert 
        < 1 \,.
\end{equation}
Then there exists a closed extension $H_V$ of $H_0 + B^*A$ which is similar to $H_0$ satisfying for all $z \in \rho(H_0)$
\begin{align*}
    (H_0 -z )^{-1} - (H_V - z)^{-1} = \overline{(H_0 - z)^{-1}B^*}A(H_V - z)^{-1}.
\end{align*}
\end{theorem}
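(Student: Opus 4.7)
My approach would follow the classical Kato strategy. Set $K(z) := A(H_0-z)^{-1}B^*$, which is bounded on $\mathcal{H}$ for every $z \in \rho(H_0)$ thanks to the inclusions $\dom(H_0) \subset \dom(A) \cap \dom(B)$. The hypothesis \eqref{smooth} guarantees via a Neumann series that $I + K(z)$ is invertible, with $\|(I+K(z))^{-1}\|$ uniformly bounded on $\rho(H_0)$.

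First I would define the candidate resolvent
\[
R(z) \;:=\; (H_0-z)^{-1} \;-\; \overline{(H_0-z)^{-1}B^*}\,(I + K(z))^{-1}\,A\,(H_0-z)^{-1},
\]
the closure being needed because $B^*$ may be unbounded. Using the first resolvent identity for $H_0$ together with
\[
(I+K(w))^{-1} - (I+K(z))^{-1} \;=\; (I+K(w))^{-1}\bigl(K(z)-K(w)\bigr)(I+K(z))^{-1},
\]
a direct algebraic manipulation yields $R(z)-R(w) = (z-w)\,R(z)R(w)$ for all $z,w \in \rho(H_0)$; hence $R$ is the resolvent of a uniquely determined closed operator $H_V$ with $\rho(H_V) \supset \rho(H_0)$. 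Next I would check that $R(z)(H_0 + B^*A - z)\psi = \psi$ for any $\psi \in \dom(H_0) \cap \dom(B^*A)$, which reduces to the algebraic identity $K(z)(I+K(z))^{-1} = I - (I+K(z))^{-1}$ after telescoping; this shows that $H_V$ extends $H_0 + B^*A$.

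The main obstacle is the similarity $H_V \sim H_0$, a conclusion strictly stronger than mere coincidence of spectra. Here one must invoke the full force of $H_0$-smoothness, which yields $\int_\mathbb{R}\|A e^{-iH_0 t}\varphi\|^2\,dt \leq C\|\varphi\|^2$ and the analogous bound for $B$, for every $\varphi \in \mathcal{H}$. This square integrability is precisely what enables the construction, as a strong operator limit, of the generalised wave operators $W_\pm = \lim_{t\to\pm\infty} e^{iH_V t}e^{-iH_0 t}$ as bounded maps on $\mathcal{H}$ intertwining $H_0$ and $H_V$. The strict inequality in \eqref{smooth} then allows one to expand both $W_\pm$ and candidate inverses as norm-convergent Neumann series controlled by powers of $\sup_z\|K(z)\|$, so that $W_\pm$ are in fact boundedly invertible; the similarity $H_V = W_\pm H_0 W_\pm^{-1}$ and the equality $\sigma(H_V) = \sigma(H_0)$ follow, as does the displayed resolvent identity upon differentiating the intertwining relation.
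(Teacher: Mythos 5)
First, a point of reference: the paper does not prove this statement at all — it is quoted from Kato's work \cite{katoBS} — so your proposal can only be measured against Kato's original argument. Your first half is indeed the standard skeleton: set $K(z)=A(H_0-z)^{-1}B^*$ (more precisely its bounded closure, since it is a priori defined only on $\dom(B^*)$), invert $I+K(z)$ by a Neumann series thanks to \eqref{smooth}, define $R(z)$ by the second-resolvent-type formula, verify the first resolvent identity and the extension property. One small but real omission there: the first resolvent identity alone does not make $R(z)$ the resolvent of a closed operator (the zero family satisfies it too); you must also check $\Ker R(z)=\{0\}$ (and dense range if you want $H_V$ densely defined), which does follow from your formula by applying $A$ and using the invertibility of $I+K(z)$.

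The genuine gap is in the similarity step, which is the actual content of the theorem. You define $W_\pm=\lim_{t\to\pm\infty}e^{iH_Vt}e^{-iH_0t}$, but $H_V$ is a non-self-adjoint closed operator and nothing established so far shows that it generates a group $e^{iH_Vt}$, let alone one with uniformly bounded norm; by Sz.-Nagy's theorem such a bounded group already yields similarity to a self-adjoint operator, so assuming it is essentially assuming the conclusion. Moreover, $H_0$-smoothness of $A$ and $B$ alone is not what makes the wave operators exist and be boundedly invertible: the key input in Kato's proof, extracted from the resolvent formula together with the smallness \eqref{smooth}, is that $A$ (respectively $B$) is in addition smooth with respect to the \emph{perturbed} operator $H_V$ (respectively $H_V^*$). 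With that, the intertwiner is built by a stationary construction — sesquilinear forms given by integrals over $u\in\Real$ of boundary values $AR_0(u\pm i\eps)\phi$ and $B R_V(u\pm i\eps)^*\psi$, absolutely convergent precisely because of the two smoothness bounds — and its inverse is obtained by the same construction with the roles of $H_0$ and $H_V$ interchanged; no group $e^{iH_Vt}$ is ever used before the similarity is available. Your suggestion to `expand $W_\pm$ and candidate inverses as norm-convergent Neumann series controlled by $\sup_z\|K(z)\|$' is not an argument that can be run on the time-dependent definition you gave, so the heart of the theorem remains unproved in your sketch. (Minor: the displayed resolvent identity does not require differentiating an intertwining relation; it is immediate from the definition of $R(z)$ once one observes $AR(z)=(I+K(z))^{-1}AR_0(z)$.)
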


Here the similarity means that there exists an operator
$W \in \mathscr{B}(\mathcal{H})$ such that 
$W^{-1} \in \mathscr{B}(\mathcal{H})$ and 
$H_V = W H_0 W^{-1}$ holds.
Consequently, the operators~$H_V$ and~$H_0$ are isospectral.  
What is more, also the nature of the spectrum is preserved,
in particular $\sigma_\mathrm{c}(H_V) = \sigma_\mathrm{c}(H_0)$.
(The continuous spectrum $\sigma_\mathrm{c}(H)$ 
of any closed operator~$H$ in a Hilbert space~$\mathcal{H}$ 
is defined as the set of all complex points~$\lambda$
which are not eigenvalues of~$H$
such that the range of $H-\lambda$ is not equal to~$\mathcal{H}$
but its closure is.)

Instead of giving the very definition of $H_0$-smoothness 
\cite[Def.~1.2]{katoBS},
we use the equivalent criterion 
suitable to our purposes \cite[Thm.~5.1]{katoBS} 
which says that~$A$ is $H_0$-smooth if, and only if, 
\begin{equation}\label{criterion}
   \sup_{
   z \in \rho(H_0), \, \psi \in \dom(A^*) \setminus\{0\}
   } 
   \frac{\lvert \left( 
   A^*\psi, 
   \left[ R_0(z) - R_0(z^*) \right]A^*\psi
   \right) \rvert}{\lVert \psi \rVert^2} < \infty
\end{equation}
where $R_0(z) := (H_0 - z)^{-1}$.

In our case, we set $H_0 := D_0$ 
and $\mathcal{H} := L^2(\Real_+,\Com^2)$
and use the matrix polar decomposition
$V(x) = U(x) (V(x)^* V(x))^{1/2}$ with~$U(x)$ unitary
to set $A := (V^* V)^{1/4}$ and  $B^* := U (V^* V)^{1/4}$
as maximal operators of matrix multiplication in~$\mathcal{H}$.
One has $B := (V^* V)^{1/4} U^*$ 
and $\dom(A) = \dom(B) = \dom((V^* V)^{1/4})$.

We proceed with verifying the hypotheses of Theorem~\ref{kato}.
Since the domain of~$D_0$ is a subspace of 
$H^{1}(\mathbb{R}_+,\mathbb{C}^2)$, the following proposition ensures 
the right inclusion properties between domains of 
$A,B$ and~$D_0$.
\begin{proposition}
If $V \in L^1(\mathbb{R}_+,\mathbb{C}^{2,2})$,
then 
$
  H^{1}(\mathbb{R}_+,\mathbb{C}^2) 
  \subset \dom(A)
$.
\end{proposition}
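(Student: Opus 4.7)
The plan is to unwind the definition of $A$ as a maximal matrix-multiplication operator and estimate its action on an $H^1$ element. Since $(V(x)^*V(x))^{1/4}$ is self-adjoint and positive semi-definite on $\Com^2$, its operator norm equals the fourth root of the largest eigenvalue of $V(x)^*V(x)$, namely $|V(x)|^{1/2}$. Consequently, $|A(x)\psi(x)|_{\Com^2} \leq |V(x)|^{1/2}\,|\psi(x)|_{\Com^2}$ pointwise a.e., and hence
\begin{equation*}
\|A\psi\|^2 \leq \int_0^\infty |V(x)|\,|\psi(x)|^2\,dx \,.
\end{equation*}
The inclusion $H^1(\Real_+,\Com^2) \subset \dom(A)$ thus reduces to checking that this integral is finite for every $\psi \in H^1(\Real_+,\Com^2)$.

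To that end I would invoke the one-dimensional Sobolev embedding $H^1(\Real_+) \hookrightarrow L^\infty(\Real_+)$ (applied componentwise to $\psi$), which yields
\begin{equation*}
\int_0^\infty |V(x)|\,|\psi(x)|^2\,dx \leq \|\psi\|_{L^\infty}^2 \,\|V\|_{\si} < \infty \,,
\end{equation*}
proving the proposition.

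No step seems delicate: the only nontrivial ingredient is the Sobolev embedding, which is precisely what allows the hypothesis on~$V$ to be the optimal one, $V \in \si$, rather than some higher-order integrability. Moreover, the same argument applies verbatim to $B = (V^*V)^{1/4}U^*$, whose pointwise operator norm on $\Com^2$ is again $|V|^{1/2}$, so that the analogous inclusion $H^1(\Real_+,\Com^2) \subset \dom(B)$ is obtained simultaneously, as will be needed to verify the hypotheses of Theorem~\ref{kato}.
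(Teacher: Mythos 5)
Your proposal is correct and follows essentially the same route as the paper: both reduce the claim to the bound $\|A\psi\|^2 \leq \int_0^\infty |V(x)|\,|\psi(x)|^2\,dx \leq \|V\|_1\,\|\psi\|_\infty^2$ and then use the one-dimensional embedding $H^1(\Real_+)\hookrightarrow L^\infty(\Real_+)$. The only (inessential) difference is that the paper proves this $L^\infty$ bound by hand, via the elementary estimate $|\psi(x)|^2 \leq \eps\|\psi'\|^2 + \eps^{-1}\|\psi\|^2$ on smooth compactly supported functions followed by density, whereas you cite the Sobolev embedding directly.
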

\begin{proof}
Let $\psi \in C_0^\infty(\mathbb{R},\mathbb{C}^2)$ and $\eps > 0$. 
Then we have
\begin{align*}
\lvert \psi(x) \rvert^2 
=  \int_{\infty}^x {|\psi|^2}' 
= 2 \int_\infty^x \Re(\bar\psi^\top \psi') 
\leq 2 \, \|\psi\| \, \|\psi'\|  
\leq \eps \, \lVert \psi^\prime \rVert^2  
+ \eps^{-1} \, \lVert \psi \rVert^2,  
\end{align*}
for all $x \in \mathbb{R}_+$ and therefore also
$
  \|\psi\|_\infty^2 \leq \eps \, \lVert \psi^\prime \rVert^2  
+ \eps^{-1} \, \lVert \psi \rVert^2
$,
where $\|\psi\|_\infty := \||\psi|\|_{L^\infty(\Real_+)}$.
Consequently,
\begin{align*}
\lVert A \psi \rVert^2 
= \big(\psi, (V^* V)^{1/2} \psi\big) 
\leq \int_{\mathbb{R}_+} \lvert V(x) \rvert \, \lvert \psi(x) \rvert^2 \, dx 
\leq \lVert V \rVert_1 \, \lVert \psi \rVert_\infty^2 
\leq \lVert V \rVert_1 
\left( \eps \, \lVert \psi^\prime \rVert^2  + \eps^{-1} \, \lVert \psi \rVert^2 \right),
\end{align*}
where $\|V\|_1 := \||V|\|_{L^1(\Real_+)}$.
In the second inequality, we have used the equality 
of the operator norms $|(V(x)^* V(x))^{1/2}| = |V(x)|$ in~$\Com^2$. 
The density of restrictions of $C_0^\infty(\mathbb{R},\mathbb{C}^2)$ 
to~$\Real_+$ in $H^{1}(\mathbb{R}_+,\mathbb{C}^2)$ concludes the proof.
\end{proof}

To prove~\eqref{smooth} with $R_0 := (D_0-z)^{-1}$, 
we employ the fact that $K_z := A R_0(z) B^*$ 
is an integral operator in $\mathcal{H} = L^2(\Real_+,\Com^2)$ 
with the kernel (denoted by the same letter)
\begin{align*}
K_z(x,y) 
:= \lvert V(x)^* V(x) \rvert^{1/4} \
\mathcal{R}_\alpha(x,y;z) \
U(y) \, \lvert V(y)^* V(y) \rvert^{1/4} \,.
\end{align*}
The operator norm in~$\Com^2$ of the latter
can be estimated as follows:
\begin{equation}\label{criterion.estimates}
  |K_z(x,y)|
  \leq |V(x)|^{1/2} \ |\mathcal{R}_\alpha(x,y;z)| \ |V(y)|^{1/2}
  =: L_z(x,y)
  \,.
\end{equation}
The integral operator~$L_z$ in $L^2(\Real_+,\Com)$ 
generated by the kernel $L_z(x,y)$ 
obviously satisfies
$$
  \|K_z\| \leq \|L_z\|
$$
(the norms are determined by the spaces which the operators act in).

What is more, also the $D_0$-smoothness 
of~$A$ and~$B$ is reduced to the uniform boundedness of~$L_z$.
This follows from the triangle inequality applied to~\eqref{criterion}
and estimating the individual terms 
as in~\eqref{criterion.estimates}.

Finally, Lemma~\ref{supremum} yields 
$
  \|L_z\| \leq \|L\| 
$,
where~$L$ is the integral operator in $L^2(\Real_+,\Com)$  
generated by the $z$-independent kernel 
\begin{equation} 
  L(x,y) := |V(x)|^{1/2} \ 
  \sqrt{1 + \big(q + 2m \min(x,y)\big)^2} 
  \ |V(y)|^{1/2}
  \,.
\end{equation}
Applying Theorem~\ref{kato}, we therefore conclude 
with the following theorem, which is the central result of this paper.

\begin{theorem}\label{Thm.central}
Let $V \in L^1(\Real_+,\Com^{2,2})$ satisfy $\|L\| < 1$. 
Then~$D_V$ is similar to~$D_0$. 
\end{theorem}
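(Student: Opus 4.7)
The plan is a direct application of Kato's Theorem~\ref{kato}, since essentially all the necessary ingredients have been assembled in the discussion preceding the statement. I set $H_0 := D_0$ on $\mathcal{H} := L^2(\Real_+,\Com^2)$, and use the polar decomposition $V = U(V^*V)^{1/2}$ to define the closed maximal multiplication operators $A := (V^*V)^{1/4}$ and $B^* := U(V^*V)^{1/4}$ as indicated. Since $B^*A = U(V^*V)^{1/2} = V$, a closed extension of $H_0 + B^*A$ provided by Kato's theorem will, by definition, be $D_V$.

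Next I verify the three hypotheses of Theorem~\ref{kato}. The inclusion $\dom(D_0) \subset \dom(A) \cap \dom(B)$ follows from $\dom(D_0) \subset H^1(\Real_+,\Com^2) \subset \dom((V^*V)^{1/4})$, which uses the preceding proposition and the fact that, by construction, $\dom(A) = \dom(B) = \dom((V^*V)^{1/4})$. For the smallness condition~\eqref{smooth}, I observe that for every $z \in \rho(D_0)$ the operator $AR_0(z)B^*$ is an integral operator with kernel $K_z$ satisfying the pointwise operator-norm bound $|K_z(x,y)| \leq L_z(x,y)$, and hence $\|AR_0(z)B^*\| = \|K_z\| \leq \|L_z\|$. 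Applying Lemma~\ref{supremum} to dominate the resolvent kernel uniformly in $z$ yields $\|L_z\| \leq \|L\|$, so the assumption $\|L\| < 1$ gives~\eqref{smooth} with a strict uniform bound away from~$1$. For the $D_0$-smoothness of $A$ and $B$, I use the equivalent criterion~\eqref{criterion}: after writing $R_0(z) - R_0(z^*)$ and applying the triangle inequality, the integrand is controlled in absolute value by the kernel of $L_z + L_{z^*}$ as in~\eqref{criterion.estimates}, so the finiteness of the supremum again reduces to the uniform bound on $\|L_z\|$ already established.

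With all three hypotheses verified, Theorem~\ref{kato} produces a closed extension of $D_0 + V$ similar to $D_0$, which we identify with $D_V$. The main nontrivial input is the uniform resolvent estimate of Lemma~\ref{supremum}, which has already been proved; the remaining work is essentially bookkeeping, namely checking that the kernel-wise domination $|K_z| \leq L_z$ transfers both to the operator-norm bound (giving the smallness condition) and to the integral expression in the smoothness criterion. I expect no genuine obstacle beyond confirming that the extension supplied by Kato's construction is the one we agree to call $D_V$, which is a matter of definition in the framework adopted here.
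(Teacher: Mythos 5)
Your proposal is correct and follows essentially the same route as the paper: the paper's proof of this theorem is precisely the discussion preceding it, namely the choice $A = (V^*V)^{1/4}$, $B^* = U(V^*V)^{1/4}$, the domain inclusion from the preceding proposition, the kernel domination $|K_z(x,y)| \leq L_z(x,y)$ combined with Lemma~\ref{supremum} to get $\|A R_0(z) B^*\| \leq \|L\| < 1$ uniformly in $z$, the reduction of the smoothness criterion~\eqref{criterion} to the same uniform bound, and finally Kato's Theorem~\ref{kato}, with $D_V$ defined as the extension it produces. No gaps; this matches the paper's argument.
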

\begin{remark}\label{Rem.sub}
The sufficient condition of Theorem~\ref{Thm.central} 
is of the same nature as the subordination requirement~\eqref{sub}.
To see it, note (\cf~\cite[Rem.~21]{KLS}) 
that~\eqref{sufcon.non} implies
that the non-relativistic variant 
$$
  L_\infty(x,y) :=  \lvert V(x) \rvert^{1/2}  
  \left(\frac{2m}{\beta} + 2m \min(x,y)\right) 
  \lvert V(y) \rvert^{1/2}  
$$
as an integral operator in $L^2(\Real_+,\Com)$ has norm less than one.
Since $L_\infty = |V|^{1/2} S_0^{-1} |V|^{1/2}$,
the last requirement is equivalent to the subordination condition
$$
  \forall \psi \in W^{1,2}(\Real_+)
  \,, \qquad
  \int_0^\infty |V| \, |\psi|^2
  \leq \frac{a}{2m} 
  \left(
  \int_0^\infty |\psi'|^2 + \beta \, |\psi(0)|^2
  \right)
  ,
$$
with $a < 1$. Note that the right-hand side is 
the quadratic form of~$S_0$ multiplied by~$a$.
We refer to~\cite[Sec.~2]{FKV} and \cite[Sec.~7.1.1]{Birman}
for the idea of the equivalence. 
Now, $L(x,y) \leq L_1(x,y) + L_2(x,y)$,
where 
$
  L_1(x,y) := \lvert V(x) \rvert^{1/2} \, \lvert V(y) \rvert^{1/2}  
$ 
is a rank-one operator and 
$$
  L_2(x,y) :=  \lvert V(x) \rvert^{1/2}  
  \, \big(q + 2m \min(x,y)\big) \,
  \lvert V(y) \rvert^{1/2}  
$$
has the same structure as~$L_\infty$. 
Consequently, $\|L\| < 1$ provided that 
$\|L_1\| \leq 1- a$ and $\|L_2\| \leq a$ with some $a \in (0,1)$.
This is equivalent to the simultaneous validity of 
$\|V\|_1 \leq 1-a$ and
$$
  \forall \psi \in W^{1,2}(\Real_+)
  \,, \qquad
  \int_0^\infty |V| \, |\psi|^2
  \leq \frac{a}{2m} 
  \left(
  \int_0^\infty |\psi'|^2 + \frac{2m}{q} \, |\psi(0)|^2
  \right)
  .
$$
\end{remark}

Estimating the operator norm of~$L$ by the Hilbert--Schmidt norm,
we get the following corollary of Theorem~\ref{Thm.central}, 
which particularly implies 
the spectral claim of Theorem~\ref{Thm.main} from the introduction.  

\begin{corollary}\label{Corol.central} 
Let $V \in L^1(\Real_+,\Com^{2,2}; (1+x) \, dx)$ 
satisfy~\eqref{sufcon}. 
Then~$D_V$ is similar to~$D_0$. 
\end{corollary}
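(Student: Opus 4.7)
The plan is to deduce the corollary from Theorem~\ref{Thm.central} by estimating the operator norm of the integral operator~$L$ by its Hilbert--Schmidt norm. Since~$L$ acts on $L^2(\Real_+,\Com)$ with a scalar, non-negative, symmetric kernel $L(x,y)$, the standard bound yields
\begin{equation*}
  \|L\|^2 \;\leq\; \|L\|_{\mathrm{HS}}^2
  \;=\; \int_0^\infty\!\!\int_0^\infty L(x,y)^2 \,dx\,dy.
\end{equation*}
Squaring the definition of $L(x,y)$ gives
\begin{equation*}
  L(x,y)^2
  \;=\; |V(x)| \,\big[1 + \bigl(q + 2m\min(x,y)\bigr)^2\big]\, |V(y)|,
\end{equation*}
so the Hilbert--Schmidt integral coincides exactly with the left-hand side of~\eqref{sufcon}.

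Consequently, the hypothesis~\eqref{sufcon} reads precisely $\|L\|_{\mathrm{HS}}^2 < 1$, which gives $\|L\| < 1$ and allows Theorem~\ref{Thm.central} to apply. The conclusion is that $D_V$ is similar to $D_0$, as claimed.

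The weight $(1+x)$ in the assumption $V \in L^1(\Real_+,\Com^{2,2};(1+x)\,dx)$ enters only to ensure that each piece of the double integral is a priori finite: the leading large-$(x,y)$ contribution is controlled by $\min(x,y)^2 \leq xy$, hence
\begin{equation*}
  \int_0^\infty\!\!\int_0^\infty |V(x)|\,\min(x,y)^2\,|V(y)|\,dx\,dy
  \;\leq\; \Big(\int_0^\infty x\,|V(x)|\,dx\Big)^{\!2},
\end{equation*}
which is finite by the weighted $L^1$ assumption, and the lower-order terms are likewise controlled by $\|V\|_1^2$ and $\|V\|_1 \int x|V|$. Strictly speaking, once \eqref{sufcon} is assumed, finiteness of the integral is part of the hypothesis, so the weighted-$L^1$ assumption is essentially a convenient sufficient condition for the integrand to make sense.

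There is no real obstacle in this argument: the content is entirely absorbed in Theorem~\ref{Thm.central} (and Lemma~\ref{supremum} that feeds it), and what remains is the routine passage from operator norm to Hilbert--Schmidt norm on a symmetric, non-negative kernel.
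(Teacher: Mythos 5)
Your argument is correct and is exactly the paper's own route: bound $\|L\|$ by the Hilbert--Schmidt norm, observe that $\|L\|_{\mathrm{HS}}^2$ coincides with the left-hand side of~\eqref{sufcon}, and invoke Theorem~\ref{Thm.central}, with the weighted-$L^1$ assumption (via $\min(x,y)\leq\sqrt{xy}$) serving only to guarantee finiteness of the double integral. Nothing further is needed.
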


The condition $V \in L^1(\Real_+,\Com^{2,2}; (1+x) \, dx)$ 
ensures the finiteness of the integral in~\eqref{sufcon}.
Indeed, using $\min(x,y) \leq \sqrt{xy}$ for every $x,y \in \mathbb{R}_+$,
condition~\eqref{sufcon} follows as a consequence of
\begin{equation*}
\left( \int_0^\infty \lvert V(x) \rvert \, dx \right)^2 
+ \left( \int_0^\infty \lvert V(x) \rvert \,
\big(q+2m x\big)^2 dx \right)^2
< 1 \,.
\end{equation*}

Finally, we state the following alternative result 
to Corollary~\ref{Corol.central} 
for very particularly structured potentials.
Its proof follows the same chain of reasonings as above,
just the usage of Lemma~\ref{supremum} is replaced by
Lemma~\ref{supremum.bis}.
\begin{theorem}\label{Thm.alt}
Let $V \in L^1\big((0,\infty),\Com^{2,2};(1+x) \, dx\big)$
be such that $V_{12} = V_{21} = V_{22} = 0$
and satisfy
\begin{align}\label{sufcon.alt}
 \int_0^\infty \int_0^\infty 
 \lvert V(x) \rvert \
  \max\left(1, \frac{1}{\cot(\alpha)} + 2m \min(x,y)\right)^2 \
  \lvert V(y) \rvert \ dx \, dy
 < 1 \,.
\end{align}
Then~$D_V$ is similar to~$D_0$. 
\end{theorem}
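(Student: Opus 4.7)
The plan is to follow the proof of Corollary~\ref{Corol.central} almost verbatim, exploiting the structural assumption $V_{12}=V_{21}=V_{22}=0$ so that only the $(1,1)$ entry of the free resolvent kernel enters the Birman--Schwinger operator. At the critical step where the resolvent is bounded uniformly in $z$, the sharper estimate of Lemma~\ref{supremum.bis} can then be used in place of Lemma~\ref{supremum}.

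First, in place of the general polar decomposition of Section~\ref{Sec.stability}, I would work with the concrete factorisation
\begin{equation*}
  A(x) := \begin{pmatrix} |V_{11}(x)|^{1/2} & 0 \\ 0 & 0 \end{pmatrix},
  \qquad
  B^*(x) := \begin{pmatrix} \sigma(x)\,|V_{11}(x)|^{1/2} & 0 \\ 0 & 0 \end{pmatrix},
\end{equation*}
where $\sigma(x) := V_{11}(x)/|V_{11}(x)|$ on the support of $V_{11}$ (and set to $1$ elsewhere). Then $B^*A=V$ and $|A(x)|=|B^*(x)|=|V(x)|^{1/2}$, so the inclusion $H^1(\Real_+,\Com^2)\subset\dom(A)\cap\dom(B)$ follows by the same Sobolev/trace argument as in Section~\ref{Sec.stability}, with $\|V\|_1$ replaced by $\|V_{11}\|_1=\|V\|_1$.

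Next, I would compute the Birman--Schwinger kernel of $K_z:=AR_0(z)B^*$. The matrix product $A(x)\,\mathcal{R}_\alpha(x,y;z)\,B^*(y)$ has all entries equal to zero except the $(1,1)$ entry, which equals $|V_{11}(x)|^{1/2}\,\mathcal{R}_{\alpha,11}(x,y;z)\,\sigma(y)\,|V_{11}(y)|^{1/2}$. Its operator norm in $\Com^2$ is just the absolute value of that entry, and since $|V(x)|=|V_{11}(x)|$ under our structural hypothesis, Lemma~\ref{supremum.bis} yields the pointwise uniform bound
\begin{equation*}
  |K_z(x,y)| \leq |V(x)|^{1/2}\;\max\!\left(1,\;\frac{1}{\cot(\alpha)}+2m\min(x,y)\right)|V(y)|^{1/2}
\end{equation*}
for every $z\in\rho(D_0)$. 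Bounding the operator norm of $K_z$ by the Hilbert--Schmidt norm of the right-hand kernel and invoking the assumption~\eqref{sufcon.alt} immediately gives $\sup_z\|K_z\|<1$. The $D_0$-smoothness of $A$ and $B$ via criterion~\eqref{criterion} is obtained by applying exactly the same kernel estimate to $A R_0(z) A^*$ and $B R_0(z) B^*$ (the $(1,1)$ entry is again the only one present), as was done right after~\eqref{criterion.estimates}.

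With all hypotheses of Theorem~\ref{kato} verified, the similarity $D_V\sim D_0$ follows. I do not anticipate any substantial obstacle: the only point requiring a line of care is the definition of $\sigma$ on the zero set of $V_{11}$, handled by the convention above. The reason the argument is strictly sharper than the one underlying Corollary~\ref{Corol.central} is precisely that Lemma~\ref{supremum.bis} gives, near $z=-m$, the bound $1$ rather than $1+(\cot(\alpha)+2m\min(x,y))^2$, and the degenerate structure of $V$ ensures that only this better $(1,1)$-specific estimate is ever needed.
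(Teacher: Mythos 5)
Your proposal is correct and follows essentially the same route as the paper, which proves Theorem~\ref{Thm.alt} by repeating the chain of arguments leading to Theorem~\ref{Thm.central} and Corollary~\ref{Corol.central} with Lemma~\ref{supremum} replaced by Lemma~\ref{supremum.bis}. Note moreover that your ``concrete'' factorisation is exactly the paper's polar-decomposition choice $A=(V^*V)^{1/4}$, $B^*=U(V^*V)^{1/4}$ specialised to $V=\mathrm{diag}(V_{11},0)$ (so the extension $D_V$ produced by Theorem~\ref{kato} is the same operator as defined in Section~\ref{Sec.stability}), and your observation that only $\mathcal{R}_{\alpha,11}$ enters the Birman--Schwinger kernel is precisely the point the paper leaves implicit.
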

%

\section{The Dirac delta potentials and optimality}\label{Sec.optimal}
%
It is natural to ask the question whether
our sufficient condition~\eqref{sufcon} is optimal.
More specifically, by the optimality we mean that
for every $\eps > 0$ there is a potential 
$V \in L^1(\Real_+,\Com^{2,2}; (1+x) \, dx)$ 
such that 
\begin{align*} 
\lVert V \rVert^2_m :=
\int_0^\infty \int_0^\infty 
\lvert V(x) \rvert 
\left[ 1 + \big(q + 2m \min(x,y)\big)^2\right]  
\lvert V(y) \rvert \, dx \, dy = 1 + \eps \,,
\end{align*}
and~$D_V$ has an eigenvalue outside the interval 
$\sigma(D_0) = (-\infty,-m] \cup [m,+\infty)$.
 
In the case of Schr\"odinger operators 
on the half-axis~\cite{Frank-Laptev-Seiringer_2011}
as well as Dirac operators 
on the whole line~\cite{Cuenin-Laptev-Tretter_2014},
an optimality is always achieved by Dirac delta potentials.
We try to follow this approach in this paper too.

We restrict to $m > 0$.
Given $t \in \mathbb{R}$ and $a > 0$,
let~$V_t$ be formally given by 
\begin{align}\label{delta}
V_t(x) := t~\delta(x-a)
\begin{pmatrix}
1 & 0 \\
0 & 0 
\end{pmatrix}.
\end{align}
Then
%
$
\lVert V \rVert^2_m = t^2 \, [1+(q+2ma)^2]
$,
%
so the question is whether~$D_{V_t}$ has an eigenvalue in $(-m,m)$ 
whenever $t>t_*$ or $t<-t_*$ with
\begin{align*}
  t_* := \frac{1}{\sqrt{1+(q+2ma)^2}} \,.
\end{align*}
Rigorously,
we understand~$D_{V_t}$ as the self-adjoint operator which acts as~$D_0$ 
in $(0,a) \cup (a,+\infty)$ and functions 
$\psi \in H^1(\Real_+,\Com^2)$ in its domain satisfy,
in addition to the boundary condition~\eqref{bc.intro}, 
also the interface condition 
(see, \eg, \cite{lukas})
\begin{align*}
\begin{pmatrix}
 it/2 & -i \\
 1 & 0
\end{pmatrix}
\psi(a^+) +
\begin{pmatrix}
 it/2 & i \\
 -1 & 0
\end{pmatrix}
\psi(a^-) = 
\begin{pmatrix}
 0 \\
 0
\end{pmatrix}.
\end{align*}
Then we rely on the property that~$D_{V_t}$ can be approximated
by a family $D_0 + \epsilon^{-1} V(\epsilon^{-1} \cdot)$ 
with regular potentials~$V$
in a norm-resolvent sense, \cf~\cite{tusek}
(the adaptation of the proof of~\cite{tusek} 
to the present model is straightforward,
because the resolvent kernel of our operator~$D_0$ on the half-line 
is derived from its whole-line realisation~$D$).

Solving the eigenvalue problem $D_{V_t}\psi = \lambda \psi$
with $\lambda \in (-m,m)$,
we arrive at the implicit equation 
\begin{equation}\label{implicit1}  
  t = - \frac{\cot(\alpha) \sqrt{ m^2-\lambda^2 } + m -\lambda} 
  {\cot(\alpha) (m+\lambda) \sinh \left(a \sqrt{m^2-\lambda^2}\right) 
  + \sqrt{m^2-\lambda^2} \cosh \left(a \sqrt{m^2-\lambda^2}\right)}
  \, e^{a \sqrt{m^2-\lambda^2}}
  \,.
\end{equation}
Since the right-hand side is negative for any $\lambda \in (-m,m)$,
we get the expected result that 
there no solutions for all $t \geq t_0$ with some $t_0 < 0$.
On the other hand, $\lambda = 0$ is clearly a solution of~\eqref{implicit1}  
for a certain negative value of~$t$.
Hence, the critical value~$t_0$ can be chosen 
such that~$D_{V_t}$ possesses a discrete eigenvalue 
for all~$t$ less than but close to~$t_0$.
This eigenvalue emerges for $t=t_0$ either from~$m$ or~$-m$. 
Since the right-hand side of~\eqref{implicit1} diverges 
as $\lambda \to -m$, the eigenvalue must emerge from~$m$.  
Taking the limit $\lambda \to m$ in~\eqref{implicit1},
we find  
\begin{equation}
  t_0 = -  
  \frac{\cot(\alpha)}{1+2 m a \cot(\alpha)}
  \,.    
\end{equation}
The dependence of the eigenvalue on the parameter~$t$ 
is depicted in Figure~\ref{Fig.delta}.

\begin{figure}[h!]
\begin{center}
\includegraphics[width=0.9\textwidth]{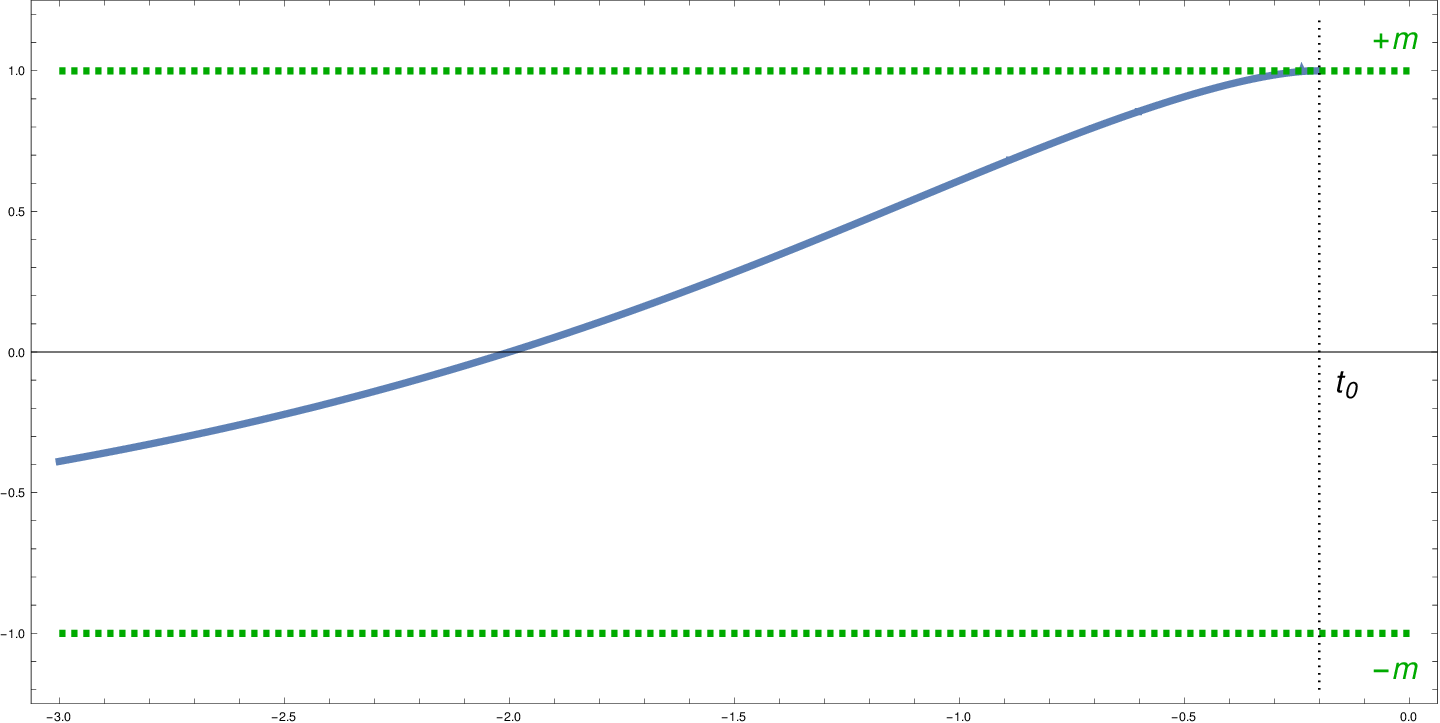}
\end{center}
\caption{The eigenvalue of $D_{V_t}$ as a function of~$t$ 
for $m=1$, $a=2$ and $\alpha=\frac{\pi}{4}$.}
\label{Fig.delta}
\end{figure}

Since $t_0 < -t_*$, we conclude that 
Theorem~\ref{Thm.main} is not optimal for the choice~\eqref{delta}.  
We also tried different matrices with different entries
or combinations being the Dirac delta interactions, 
but never achieved the optimality.
We leave as an open problem whether the optimality
of Theorem~\ref{Thm.main} can be achieved by a different
choice of the potential.

Despite failing to establish the optimality of Theorem~\ref{Thm.main},
the present analysis proves the optimality of the alternative
result stated in Theorem~\ref{Thm.alt}.
\begin{proposition}
Theorem~\ref{Thm.alt} is optimal for the choice~\eqref{delta}.
More specifically, let $\alpha \in (0,\frac{\pi}{2})$,
$m > 0$ and $a>0$ be such that
$$ 
  \frac{1}{\cot(\alpha)} + 2m a > 1 
  \,.
$$
Then there exists $\eps_0 > 0$ such that, 
given any $\eps \in (0,\eps_0)$,
the operator $D_{V_{t}}$ with $t=t_0 \sqrt{1+\eps}$
possesses a discrete eigenvalue in the interval $(-m,m)$
and the left-hand side of~\eqref{sufcon.alt} equals $1+\eps$.
\end{proposition}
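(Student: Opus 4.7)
The plan is to verify the two assertions of the proposition separately: the exact value of the left-hand side of~\eqref{sufcon.alt} and the existence of a discrete eigenvalue of~$D_{V_t}$ in $(-m,m)$.

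For the first assertion, the delta structure of~$V_t$ in~\eqref{delta} reduces the double integral in~\eqref{sufcon.alt} to a point evaluation at $x=y=a$, producing $t^2\,\max\big(1,\,1/\cot(\alpha) + 2ma\big)^2$. The hypothesis $1/\cot(\alpha) + 2ma > 1$ selects the second argument of the maximum, so the value becomes $t^2\,\big(1/\cot(\alpha) + 2ma\big)^2$. A direct manipulation of the explicit expression for~$t_0$ recorded just before the proposition yields $t_0\,\big(1/\cot(\alpha) + 2ma\big) = -1$, so the left-hand side of~\eqref{sufcon.alt} equals $(t/t_0)^2$. Substituting $t = t_0\sqrt{1+\eps}$ gives exactly $1+\eps$, as required.

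For the existence of the eigenvalue, I would appeal to the implicit equation~\eqref{implicit1}, denoting its right-hand side by a continuous function $f:(-m,m)\to\Real$. The discussion preceding the proposition has already established that $\lim_{\lambda\to m^-}f(\lambda) = t_0$ and that $f(\lambda)$ diverges to $-\infty$ as $\lambda \to -m^+$. By continuity and the intermediate value theorem, for every $t \in (-\infty, t_0)$ there exists $\lambda \in (-m,m)$ with $f(\lambda) = t$, which is then a discrete eigenvalue of~$D_{V_t}$. Since $t_0 < 0$ and $\sqrt{1+\eps}>1$ for every $\eps > 0$, the choice $t = t_0\sqrt{1+\eps}$ satisfies $t < t_0$, so the corresponding eigenvalue lies in $(-m,m)$, and any positive $\eps_0$ works.

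The main obstacle is really the bookkeeping step that connects the two halves: the identity $t_0\,(1/\cot(\alpha) + 2ma) = -1$ is exactly what makes the normalisation from~\eqref{sufcon.alt} coincide with the eigenvalue threshold~$t_0$. All the hard analytic content --- the identification of~$t_0$ from the $\lambda \to m^-$ limit of~\eqref{implicit1} and the divergence of~$f$ near $\lambda = -m^+$ --- has been carried out already in the paper. The hypothesis $1/\cot(\alpha) + 2ma > 1$ serves only to trivialise the maximum in~\eqref{sufcon.alt}; without it, the left-hand side would scale as $t^2$ rather than $(t/t_0)^2$ and the delta potential~\eqref{delta} would cease to witness the sharpness of Theorem~\ref{Thm.alt}.
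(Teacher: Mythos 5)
Your proposal is correct and follows essentially the same route as the paper: the optimality is read off from the delta-potential analysis preceding the proposition, namely the computation of the left-hand side of~\eqref{sufcon.alt} (which reduces to $t^2\max\bigl(1,\cot(\alpha)^{-1}+2ma\bigr)^2$ together with the identity $t_0\bigl(\cot(\alpha)^{-1}+2ma\bigr)=-1$) and the behaviour of the right-hand side of~\eqref{implicit1} at $\lambda\to\pm m$. Your explicit intermediate-value argument (using that the denominator in~\eqref{implicit1} is positive on $(-m,m)$ for $\alpha\in(0,\frac{\pi}{2})$) is a slightly more careful rendering of the paper's ``eigenvalue emerges from $m$ for $t$ below $t_0$'' and even shows that any $\eps_0>0$ works.
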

%

\section{The non-relativistic limit}\label{Sec.non}
%
In order to calculate the non-relativistic limit, 
the speed of light~$c$, 
as a positive parameter to be sent to infinity, 
needs to be recovered in~$D_0$.
As for the action of~$D_0$, 
we follow the classical literature~\cite{Thaller} 
and conventionally set
$$ 
  D_{0,c} :=  
\begin{pmatrix}
mc^2 & -c\dfrac{d}{dx} \\
c\dfrac{d}{dx} & -mc^2 \\
\end{pmatrix}.
$$
Contrary to the recent approaches in
\cite{Cuenin_2014,Arrizibalaga-LeTreust-Raymond_2017,
Behrndt-Frymark-Holzmann-Stelzer-Landauer}, however,
we also make the boundary condition $c$-dependent
(and in fact also $m$-dependent):
\begin{equation*} 
\dom(D_{0,c}) := \left\lbrace \psi \in H^1(\mathbb{R}_+, \mathbb{C}^2) : \ \psi_1(0) \, \frac{\cot(\alpha)}{mc} = \psi_2(0)\right\rbrace.
\end{equation*}
This particular form is motivated by the fact that the boundary condition
can be considered as a point interaction, 
which is known to be approximated by regular potentials 
\cite{tusek}, 
and the latter require a $c$-dependence,
see~\cite[Chap.~6]{Thaller}, \cite{limit} and~\cite{lukas}.

Throughout this section we assume $m>0$.
Physically, it makes sense because there is no 
non-relativistic quantum theory for massless particles. 
Mathematically, the non-relativistic Hamiltonian~\eqref{operator.non},
which we expect to be obtained after sending~$c$ to infinity,
requires positive~$m$.

The sufficient condition~\eqref{sufcon} restated for~$D_{0,c}$
perturbed by $c$-independent potentials~$V$ reads
\begin{align*}
   \frac{1}{c^2} \int_0^\infty \int_0^\infty 
   \lvert V(x) \rvert 
   \left[1 + \big(q_c + 2mc \min(x,y)\big)^2 \right] 
   \lvert V(y) \rvert \, dx \, dy < 1 
   \,,
\end{align*} 
where  
$
  q_c := \max\left(\frac{\cot(\alpha)}{mc},\frac{mc}{\cot(\alpha)}\right)
$.
Sending the $c$ to infinity, we 
formally (here $V$ is a matrix-valued function) 
arrive at the sufficient condition~\eqref{sufcon.non}, 
which guarantees the stability of the spectrum of
the non-relativistic operator~$S_V$ 
under the identification~\eqref{identification}.   
Let us now prove that not only the sufficient 
conditions~\eqref{sufcon} and~\eqref{sufcon.non} are compatible
in the non-relativistic limit, but the operators do so as well.

\begin{theorem}
Let $\alpha \in (0,\frac{\pi}{2})$ and $m > 0$.
If $z \in \rho(S_0)$, then, for all sufficiently large~$c$, 
$z \in \rho(D_{0,c} - mc^2)$ and
\begin{align*}
\lim \limits_{c \to +\infty} 
\left\lVert 
(D_{0,c} -mc^2 - z)^{-1} 
- 
\begin{pmatrix}
(S_0 - z)^{-1} & 0 
\\
0 & 0 
\end{pmatrix}
\right\rVert = 0
\,.
\end{align*}  
\end{theorem}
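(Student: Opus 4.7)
The plan is to carry out an asymptotic analysis of the explicit resolvent kernel~\eqref{resolvent} adapted to the $c$-dependent operator $D_{0,c}$. The key observation is the scaling $D_{0,c} = c\,\tilde D_c$, where $\tilde D_c$ has the form of~$D_0$ with the mass $m$ replaced by $mc$ and the boundary parameter $\cot\alpha$ replaced by $\cot\alpha/(mc)$. Consequently,
\begin{equation*}
(D_{0,c} - mc^2 - z)^{-1} = c^{-1}\,(\tilde D_c - mc - z/c)^{-1},
\end{equation*}
whose kernel is given by~\eqref{resolvent} with the substitutions $m \mapsto mc$, $z \mapsto mc + z/c$ and $\cot\alpha \mapsto \cot\alpha/(mc)$. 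The key coefficients become $k_c := \sqrt{z^2/c^2 + 2mz}$ and $\zeta_c := (z/c + 2mc)/k_c$, with limits $k_c \to k_0 := \sqrt{2mz}$ (of strictly positive imaginary part since $z \in \rho(S_0) = \Com \setminus [0,+\infty)$) and $\zeta_c/c \to \sqrt{2m/z}$. That $z \in \rho(D_{0,c} - mc^2)$ for all large~$c$ is immediate from the fact that this self-adjoint operator has spectrum $(-\infty,-2mc^2]\cup[0,+\infty)$, which avoids~$z$ once~$c$ is sufficiently large.

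Next I would pass to the pointwise limit entry-by-entry in the matrix kernel $c^{-1}\mathcal{R}_{\alpha,c}(x,y;mc+z/c)$. The overall factor $c^{-1}$ annihilates the off-diagonal entries (which carry $\pm\frac{1}{2}\sgn(x-y)$) and the $(2,2)$-entry (which carries an additional $1/\zeta_c \sim 1/c$), while in the $(1,1)$-entry the product $i\zeta_c/(2c) \to im/k_0$ remains finite. The reflection coefficient $\eta_c = (1-i\zeta_c\cot\alpha/(mc))/(1+i\zeta_c\cot\alpha/(mc))$ converges to $(k_0-i\beta)/(k_0+i\beta)$ with $\beta := 2\cot\alpha$ as in~\eqref{identification}, because $\zeta_c\cot\alpha/(mc) \to \beta/k_0$. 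Collecting the terms, the $(1,1)$-entry converges pointwise to
\begin{equation*}
\frac{im}{k_0}\left(\ee^{ik_0|x-y|} + \frac{k_0-i\beta}{k_0+i\beta}\,\ee^{ik_0(x+y)}\right),
\end{equation*}
which is precisely the integral kernel of $(S_0 - z)^{-1}$ with Robin parameter~$\beta$.

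To upgrade pointwise convergence of the kernels to operator-norm convergence, I would apply the Schur test. Since $\Im k_c$ is uniformly bounded below by a positive constant~$\gamma$ for all sufficiently large~$c$, every matrix entry of $c^{-1}\mathcal{R}_{\alpha,c}(x,y;mc+z/c)$ admits an exponential bound of the form $C_0\big(\ee^{-\gamma|x-y|}+\ee^{-\gamma(x+y)}\big)$, whose row and column integrals are uniformly bounded. A first-order Taylor expansion in $1/c$ of the coefficients $k_c$, $\zeta_c/c$ and $\eta_c$ improves this to the bound $C_1 c^{-1}\big(\ee^{-\gamma|x-y|}+\ee^{-\gamma(x+y)}\big)$ for the difference with the limiting block-diagonal kernel. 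The Schur test then delivers norm convergence at the rate $O(1/c)$. The main obstacle will be the bookkeeping for the $(1,1)$-entry, where one must track cancellations in the expansions of $\zeta_c/c$ and~$\eta_c$ and verify that the remainders stay uniform in $(x,y) \in \Real_+^2$ after multiplication by the exponential factors; by contrast, the off-diagonal and $(2,2)$-entries are straightforward since the prefactor $1/c$ is already present from the scaling.
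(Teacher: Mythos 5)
Your proposal is correct, but it follows a genuinely different route from the paper. You work directly with the explicit kernel: after the scaling $D_{0,c}=c\,\tilde D_c$ (mass $mc$, boundary parameter $\cot\alpha/(mc)$) you compute the entrywise pointwise limits of $c^{-1}\mathcal{R}_{\alpha,c}(x,y;mc^2+z)$ -- the coefficients $k_c\to\sqrt{2mz}$, $\zeta_c/c\to\sqrt{2m/z}$, $\eta_c\to(k_0-i\beta)/(k_0+i\beta)$ with $\beta=2\cot\alpha$ are all right, and the limiting $(1,1)$-entry is indeed the Robin resolvent kernel~\eqref{robin resolvent} -- and then you upgrade to operator-norm convergence by a Schur test with uniform exponential weights, since $\Im k_c$ stays bounded away from $0$. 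The paper instead avoids re-deriving the whole-line asymptotics: it extends $\phi\in L^2(\Real_+,\Com^2)$ to the line via the reflection matrix $\mathbb{A}_c=\sigma_3\eta_c(\alpha)$, bounds the half-line resolvent difference by $\sqrt{1+|\eta_c(\alpha)|^2}$ times the whole-line difference $\lVert (D_c-mc^2-z)^{-1}-(S-z)^{-1}\mathbb{E}_1\rVert_{\Real}$, invokes the known non-relativistic limit of Gesztesy--Grosse--Thaller for that term, and disposes of the remaining boundary correction $(S_{\mathbb{A}_c}-z)^{-1}\to(S_0-z)^{-1}$ by a Hilbert--Schmidt estimate driven by $\eta_c(\alpha)\to\xi(\alpha)$. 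Your approach is self-contained (no citation of the whole-line limit needed) and yields an explicit rate, at the price of more bookkeeping: you must control the differences of exponentials by $|k_c-k_0|\,|x\pm y|\,e^{-\gamma'|x\pm y|}$ uniformly in $(x,y)$, and note that the $(1,1)$-coefficients actually differ only by $O(c^{-2})$ (since $k_c-k_0=O(c^{-2})$), so your claimed overall $O(1/c)$ rate, dictated by the off-diagonal entries, is a valid upper bound. The paper's argument is shorter modulo the literature and cleanly isolates the boundary effect; both are legitimate proofs.
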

\begin{proof}
In the present $c$-dependent setting,
the integral kernel of $\left(D_{0,c} - z \right)^{-1}$
reads (\cf~\eqref{resolvent-matrix form})
\begin{align*}
\mathcal{R}_{\alpha,c}(x,y;z) = \mathcal{R}_c(x,y;z) + \mathcal{R}_c(x,-y;z) \sigma_3 \eta_c(\alpha) \qquad \mbox{with} \qquad \eta_c(\alpha):=\frac{1 -i \zeta_c(z)\frac{\cot(\alpha)}{mc}}{1 + i \zeta_c(z)\frac{\cot(\alpha)}{mc}},
\end{align*}
where
$
  z \in \rho(D_{0,c}) = \Com \setminus (-\infty,-mc^2] \cup [mc^2,+\infty)
$
and
\begin{align*}
\mathcal{R}_c(x,y;z) &= \frac{1}{2c} 
\begin{pmatrix}
i\zeta_c(z) & \sgn(x-y)\\
-\sgn(x-y) & i\zeta_c^{-1}(z)
\end{pmatrix}
\exp(ik_c(z)\lvert x-y \rvert), 
\end{align*}
is the resolvent kernel of~$D_c$, 
the free $c$-dependent variant of the Dirac operator~$D$
acting on the whole line, with 
$$
  \zeta_c(z):= \frac{z+mc^2}{ck_c(z)} 
  \qquad \mbox{and} \qquad 
  k_c(z) := c^{-1} \sqrt{z^2 - (mc^2)^2}
  .
$$

On the other hand, 
 for $z \in \rho(S_0) = \mathbb{C} \setminus [0,+\infty)$, 
 the resolvent kernel of~$S_0$ is given by 
 (\cf~\cite{KLS})
\begin{align*}
\mathcal{G}_\alpha(x,y;z) := (S_0 - z)^{-1}(x,y;z) = \mathcal{G}(x,y;z) + \mathcal{G}(x,-y;z) \, \xi(\alpha),    
\end{align*}\
where
\begin{align}\label{robin resolvent}
    \xi(\alpha) := \frac{\sqrt{2mz} -i2\cot(\alpha)}{\sqrt{2mz} +i2\cot(\alpha)}
\end{align}
represents the boundary condition and
\begin{align*}
\mathcal{G}(x,y;z) := \frac{im}{\sqrt{2mz}} \exp(i\sqrt{2mz}\lvert x-y \rvert), 
\end{align*}
is the resolvent kernel of 
the whole-line Schr\"odinger operator~$S$ in $L^2(\Real,\Com)$, 
which acts as the Schr\"odinger operator~$S_0$ 
in~\eqref{operator.non}, 
subject to continuity boundary conditions at zero
up to the first derivative,
namely the domain of~$S$ is $H^2(\Real,\Com)$. 

The proof of the theorem follows by comparing the integral operators
generated by the kernels $\mathcal{R}_{\alpha,c}$ 
and $\mathcal{G}_\alpha$.
We essentially employ the well-known relativistic limit 
of~$D_c$ to~$S$ on the whole line.
To this purpose,
we denote by $\lVert \cdot \rVert_{\mathbb{R}_+}$ and $\lVert \cdot \rVert_{\mathbb{R}}$ (operator) norms on 
$L^2(\mathbb{R}_+,\mathbb{C}^2)$ 
and $L^2(\mathbb{R},\mathbb{C}^2)$, respectively. 

Given any $\phi \in L^2(\mathbb{R}_+,\mathbb{C}^2)$, 
we introduce
\begin{align*}
\phi_{\mathbb{A}_c}(x) := \phi(\lvert x \rvert)\Theta(x) + {\mathbb{A}_c}\phi(\lvert x \rvert)\Theta(-x),
\end{align*}
with ${\mathbb{A}_c} := \sigma_3 \eta_c(\alpha)$. 
Note that
\begin{align*}
\lVert \phi_{\mathbb{A}_c} \rVert^2_\mathbb{R} 
&= \int_\mathbb{R} \lvert \phi_{\mathbb{A}_c}(x) \rvert^2 dx 
= \int_{\mathbb{R}_+} \lvert \phi(x) \rvert^2 dx + \int_{\mathbb{R}_+} \lvert {\mathbb{A}_c} \phi(x) \rvert^2 dx \\
&\leq (1+\lvert \eta_c(\alpha) \rvert^2) \int_{\mathbb{R}_+} \lvert \phi(x) \rvert^2 dx = (1+\lvert \eta_c(\alpha) \rvert^2) \lVert \phi \rVert^2_{\mathbb{R}_+}.
\end{align*}
Denoting 
$
\mathbb{E}_1 := 
\begin{psmallmatrix}
1 & 0 \\
0 & 0 
\end{psmallmatrix} 
$, 
we have
\begin{align*}
\lVert (D_c -mc^2 - z)^{-1} - (S - z)^{-1} \mathbb{E}_1 \rVert_{\mathbb{R}}^2 
&= \sup \limits_{\phi \in L^2(\mathbb{R},\mathbb{C}^2)} 
\frac{\lVert \big(
(D_c -mc^2 - z)^{-1} - (S - z)^{-1}  \mathbb{E}_1 \big) \phi \rVert_{\mathbb{R}}^2}{\lVert \phi \rVert_{\mathbb{R}}^2}
\\
& \geq \sup \limits_{\phi_{\mathbb{A}_c} \in L^2(\mathbb{R},\mathbb{C}^2)} \frac{\big\lVert \big((D_c -mc^2 - z)^{-1} - (S - z)^{-1}\mathbb{E}_1\big)\phi_{\mathbb{A}_c} \big\rVert_{\mathbb{R}}^2}{\lVert \phi_{\mathbb{A}_c} \rVert_{\mathbb{R}}^2}
\\
&\geq \sup \limits_{\phi \in L^2(\mathbb{R}_+,\mathbb{C}^2)} 
\frac{\big\lVert \big( (D_{0,c} -mc^2 - z)^{-1} - (S_{\mathbb{A}_c} - z)^{-1} \mathbb{E}_1
\big)\phi \big\rVert_{\mathbb{R}_+}^2}{(1+\lvert \eta_c(\alpha) \rvert^2)\lVert \phi \rVert_{\mathbb{R}_+}^2}
\\
&= \frac{1}{(1+\lvert \eta_c(\alpha) \rvert^2)} 
\big\lVert (D_{0,c} -mc^2 - z)^{-1} - (S_{\mathbb{A}_c} - z)^{-1}\mathbb{E}_1 \big\rVert_{\mathbb{R}_+}^2 ,
\end{align*}
where  
\begin{align*}
(S_{\mathbb{A}_c} - z)^{-1}(x,y;z) 
:= \mathcal{G}(x,y;z) + \mathcal{G}(x,-y;z) \eta_c(\alpha).    
\end{align*}
Note that although $(S_{\mathbb{A}_c} - z)^{-1}$ is a well defined integral operator, it is not necessarily a resolvent of a closed, densely defined operator, symbolically denoted as $S_{\mathbb{A}_c}$. 
It is well known~\cite{Gesztesy-Grosse-Thaller_1984} that
\begin{equation}
\lVert (D_c -mc^2 - z)^{-1} - (S - z)^{-1} \mathbb{E}_1 \rVert_{\mathbb{R}}
\xrightarrow[c \to +\infty]{} 0
\,.
\end{equation}
At the same time, 
$\eta_c(\alpha) \to \xi(\alpha)$ as $c \to +\infty$.
So it remains to verify that $(S_{\mathbb{A}_c} - z)^{-1}$
is close to $(S_{0} - z)^{-1}$ in the limit $c \to +\infty$.
This follows by the Hilbert--Schmidt bound:
\begin{align*}
\lVert (S_{\mathbb{A}_c} - z)^{-1} - (S_0 - z)^{-1} \rVert_{\mathrm{HS}}^2 
&= \int_{\mathbb{R}_+} 
\int_{\mathbb{R}_+} ~\lvert \mathcal{G}(x,-y;z) \rvert^2 \,
\lvert \eta_c(\alpha) - \xi(\alpha) \rvert^2 \,  dx \, dy \\
& = \frac{m}{2z} \, \lvert \eta_c(\alpha) - \xi(\alpha) \rvert^2 \int_{\mathbb{R}_+} \int_{\mathbb{R}_+} ~\exp(-2\Im(\sqrt{2mz})\lvert x +y \rvert) \, dx \, dy
\end{align*}
together with the aforementioned convergence 
$\eta_c(\alpha) \to \xi(\alpha)$ as $c \to +\infty$.
\end{proof}
%

%
%

\subsection*{Acknowledgments}
We are grateful to Luk\'a\v{s} Heriban 
for valuable discussions on the non-relativistic limit.
The authors were supported
by the EXPRO grant No.~20-17749X 
of the Czech Science Foundation.


%
\bibliography{Dirac-reference04}
\bibliographystyle{amsplain}

\end{document}